\documentclass{article}%
\usepackage{graphicx}
\usepackage{amsmath}
\usepackage{amsfonts}
\usepackage{amssymb}%
\providecommand{\U}[1]{\protect\rule{.1in}{.1in}}
\providecommand{\U}[1]{\protect\rule{.1in}{.1in}}
\providecommand{\U}[1]{\protect\rule{.1in}{.1in}}
\providecommand{\U}[1]{\protect\rule{.1in}{.1in}}
\providecommand{\U}[1]{\protect\rule{.1in}{.1in}}
\providecommand{\U}[1]{\protect\rule{.1in}{.1in}}
\providecommand{\U}[1]{\protect\rule{.1in}{.1in}}
\providecommand{\U}[1]{\protect\rule{.1in}{.1in}}
\providecommand{\U}[1]{\protect\rule{.1in}{.1in}}
\providecommand{\U}[1]{\protect\rule{.1in}{.1in}}
\providecommand{\U}[1]{\protect\rule{.1in}{.1in}}
\providecommand{\U}[1]{\protect\rule{.1in}{.1in}}
\providecommand{\U}[1]{\protect\rule{.1in}{.1in}}
\providecommand{\U}[1]{\protect\rule{.1in}{.1in}}
\providecommand{\U}[1]{\protect\rule{.1in}{.1in}}
\providecommand{\U}[1]{\protect\rule{.1in}{.1in}}
\providecommand{\U}[1]{\protect\rule{.1in}{.1in}}
\providecommand{\U}[1]{\protect\rule{.1in}{.1in}}
\providecommand{\U}[1]{\protect\rule{.1in}{.1in}}
\providecommand{\U}[1]{\protect\rule{.1in}{.1in}}
\providecommand{\U}[1]{\protect\rule{.1in}{.1in}}
\providecommand{\U}[1]{\protect\rule{.1in}{.1in}}
\providecommand{\U}[1]{\protect\rule{.1in}{.1in}}
\providecommand{\U}[1]{\protect\rule{.1in}{.1in}}
\providecommand{\U}[1]{\protect\rule{.1in}{.1in}}
\providecommand{\U}[1]{\protect\rule{.1in}{.1in}}
\providecommand{\U}[1]{\protect\rule{.1in}{.1in}}
\providecommand{\U}[1]{\protect\rule{.1in}{.1in}}
\providecommand{\U}[1]{\protect\rule{.1in}{.1in}}

\newtheorem{theorem}{Theorem}
{}

\newtheorem{conclusion}{Conclusion}

\newtheorem{corollary}{Corollary}

\newtheorem{definition}{Definition}

\newtheorem{lemma}{Lemma}
{}

\newtheorem{remark}{Remark}

\newenvironment{proof}[1][Proof]{\textbf{#1.} }{\ \rule{0.5em}{0.5em}}

\begin{document}

\title{Spectral Expansion for the One-Dimensional Dirac Operator with a
Complex-Valued Periodic Potential}
\author{O. A. Veliev\\{\small \ Dogus University, }\\{\small Esenkent 34755, \ Istanbul, Turkey.}\\\ {\small e-mail: oveliev@dogus.edu.tr}}
\date{}
\maketitle

\begin{abstract}
In this paper, we construct the spectral expansion for the one dimensional
non-self-adjoint Dirac operator $L(Q)$ with a complex-valued periodic
$2\times2$ matrix potential $Q$ in the space $L_{2}^{2}(-\infty,\infty)$. To
this end, we study in detail asymptotic formulas for the Bloch eigenvalues and
Bloch functions that are uniform with respect to the complex quasimomentum, as
well as the essential spectral singularities of $L(Q)$.

Key Words: Dirac operator, Essential spectral singularities, Spectral expansion.

AMS Mathematics Subject Classification: 34L05, 34L20.

\end{abstract}

\section{ Introduction and Preliminary Facts}

In this paper, we construct a spectral expansion for the one-dimensional Dirac
operator $L(Q)$ generated in the space $L_{2}^{2}(-\infty,\infty)$ of
$2$-coordinate complex-valued vector functions $\mathbf{y}(x)\mathbf{=}\left(
\begin{array}
[c]{c}%
y_{1}(x)\\
y_{2}(x)
\end{array}
\right)  $ by the differential expression
\begin{equation}
l(\mathbf{y,}Q)=\left(
\begin{array}
[c]{cc}%
0 & 1\\
-1 & 0
\end{array}
\right)  \mathbf{y}^{^{\prime}}(x)+Q\mathbf{y}(x), \tag{1}%
\end{equation}
where $Q(x)=\left(
\begin{array}
[c]{cc}%
p(x) & q(x)\\
q(x) & -p(x)
\end{array}
\right)  ,$ $p$ and $q$ are $\pi$-periodic, complex-valued functions of
bounded variation on $[0,\pi]$. It is well-known [10, 11, 15] that the
spectrum $\sigma(L(Q))$ of the operator $L(Q)$ is the union of the spectra
$\sigma(L_{t}(Q))$ of the operators $L_{t}(Q)$ for $t\in(-1,1]$ generated in
$L_{2}^{2}[0,\pi]$ by (1) and the boundary condition
\begin{equation}
\mathbf{y}(\pi)=e^{i\pi t}\mathbf{y}(0). \tag{2}%
\end{equation}
However, to construct the spectral expansion for $L(Q),$ we need to consider
the eigenvalues and eigenfunction of $L_{t}(Q)$ for $t\in\mathbb{C}.$ Let
$\mathbf{c}(x,\lambda)=\left(
\begin{array}
[c]{c}%
c_{1}(x,\lambda)\\
c_{2}(x,\lambda)
\end{array}
\right)  $ and $\mathbf{s}(x,\lambda)=\left(
\begin{array}
[c]{c}%
s_{1}(x,\lambda)\\
s_{2}(x,\lambda)
\end{array}
\right)  $ be the solutions of the equation%
\begin{equation}
l(\mathbf{y,}Q)=\lambda\mathbf{y} \tag{3}%
\end{equation}
satisfying the following initial conditions%

\begin{equation}
c_{1}(0,\lambda)=s_{2}(0,\lambda)=1,\text{ }s_{1}(0,\lambda)=c_{2}%
(0,\lambda)=0.\tag{4}%
\end{equation}
Substituting the general solution $x_{1}\mathbf{c}(x,\lambda)+x_{2}$
$\mathbf{s}(x,\lambda)$ into boundary condition (2) and using (4), we obtain
the following system of equations
\[
\left\{
\begin{array}
[c]{c}%
x_{1}(c_{1}(\pi,\lambda)-e^{i\pi t})+x_{2}s_{1}(\pi,\lambda)=0\\
x_{1}c_{2}(\pi,\lambda)+x_{2}(s_{2}(\pi,\lambda)-e^{i\pi t})=0
\end{array}
\right.  .
\]
A number $\lambda$ is an eigenvalue of $L_{t}(Q)$ if and only if it is a root
of the characteristic equation
\begin{equation}
\Delta(\lambda,t)=\left\vert
\begin{array}
[c]{cc}%
c_{1}(\pi,\lambda)-e^{i\pi t} & s_{1}(\pi,\lambda)\\
c_{2}(\pi,\lambda) & s_{2}(\pi,\lambda)-e^{i\pi t}%
\end{array}
\right\vert =0.\tag{5}%
\end{equation}
Using the Wronskian equality $c_{1}s_{2}-c_{2}s_{1}=1,$ we obtain
$\Delta(\lambda,t)=e^{it}(F(\lambda)-2\cos\pi t),$ where $F(\lambda
):=c_{1}+s_{2}.$ For simplicity of notation, $s_{j}(\pi,\lambda)$ and
$c_{j}(\pi,\lambda)$ are denoted by $s_{j}$ and $c_{j},$ respectively. Thus,
the eigenvalues $\lambda(t)$ of $L_{t}(Q)$ are the roots of the equation%
\begin{equation}
F(\lambda)=2\cos\pi t.\tag{6}%
\end{equation}
If
\begin{equation}
F^{^{\prime}}(\lambda(t))\neq0\text{ }\And\text{ }s_{1}(\pi,\lambda
(t))\neq0,\tag{7}%
\end{equation}
then $\lambda(t)$ is a simple eigenvalue, and the corresponding eigenfunction
$\Phi_{t}(x)$ can be written in the form
\begin{equation}
\Phi_{t}(x)=s_{1}(\pi,\lambda(t))\mathbf{c}(x,\lambda(t))+\left(  e^{i\pi
t}-c_{1}(\pi,\lambda(t))\right)  \mathbf{s}(x,\lambda(t)).\tag{8}%
\end{equation}

We now briefly outline the structure of this paper. If $p$ and $q$ are
functions of bounded variation, then it follows from the asymptotic formulas
for $\mathbf{c}(x,\lambda)$ and $\mathbf{s}(x,\lambda)$ (see, for example
[12]) that
\begin{equation}
F(\lambda)=2\cos\pi\lambda+O\left(  \frac{1}{\lambda}\right)  , \tag{9}%
\end{equation}
as $\left\vert \lambda\right\vert \rightarrow\infty.$ Using this result, in
Section 2, we prove the following theorem on asymptotic formulas for eigenvalues:

\begin{theorem}
\textit{\ The eigenvalues of }$L_{t}(Q),$ for $t\in D_{h}(0,1),$%
\textit{\ consist of two sequences }

$\left\{  \lambda_{n,1}(t);\text{ }n\in\mathbb{Z}\right\}  $ and $\left\{
\lambda_{n,1}(t);\text{ }n\in\mathbb{Z}\right\}  $\ which satisfy the
following asymptotic formulas%
\begin{equation}
\lambda_{n,1}(t)=(2n+t)+O(\frac{1}{n}),\text{ }\lambda_{n,2}(t)=(2n-t)+O(\frac
{1}{n}), \tag{10}%
\end{equation}
where $D_{h}(0,1)=D_{h}\backslash\left(  U_{h}(0)\cup U_{h}(1)\right)  ,$
$D_{h}=\{t\in\mathbb{C}:|\operatorname{Im}t|\leq2h,-h\leq\operatorname{Re}%
t\leq2-h\},$ $U_{h}(a)=\{t\in\mathbb{C}:|t-a|<h\}$ and $h$ is a fixed number
from $(0,1/10).$ These formulas are uniform with respect to $t\in D_{h}(0,1)$
\end{theorem}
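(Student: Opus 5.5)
We will prove (10) with Rouché's theorem, comparing $F(\lambda)-2\cos\pi t$ with the unperturbed function $2\cos\pi\lambda-2\cos\pi t$. We use (9) and the elementary factorization
\[
2\cos\pi\lambda-2\cos\pi t=-4\sin\frac{\pi(\lambda+t)}{2}\sin\frac{\pi(\lambda-t)}{2}.
\]
The roots of the unperturbed equation are $\lambda=2n\pm t$ with $n\in\mathbb{Z}$. The point of excluding $U_h(0)\cup U_h(1)$ from $D_h$ is that, for $t\in D_h(0,1)$, the two families $2n+t$ and $2m-t$ stay apart. Indeed, $(2n+t)-(2m-t)=2(t+n-m)$, so $|t+n-m|\geq h$ is required. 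Since $-h\leq\operatorname{Re}t\leq2-h$ and $|t-a|\geq h$ for $a=0,1$, we need $|t-k|\geq h$ for all integers $k$. For $k=2$ this holds because $\operatorname{Re}t\leq 2-h$, and for $k=-1$ because $\operatorname{Re}t\geq -h>-1+h$. The other integers are even farther away. Hence all roots $2n\pm t$ are simple and mutually separated by at least $2h$.

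\emph{Step 1: lower bound on small circles.} Fix $n$ with $|n|$ large, and consider the circle $C_{n,\pm}=\{\lambda:|\lambda-(2n\pm t)|=\rho\}$ with $\rho=c/|n|$ for a large constant $c$. On this circle, write $\lambda=2n+t+\zeta$ with $|\zeta|=\rho$. Then $|\sin\frac{\pi(\lambda-t)}{2}|=|\sin\frac{\pi\zeta}{2}|\geq c_1\rho$. The other factor is $|\sin\frac{\pi(\lambda+t)}{2}|=|\sin(\pi(t+\zeta/2))|$, which is bounded below by a constant $c_2(h)>0$, uniformly in $t$. This uses $|t-k|\geq h$ for every integer $k$ and $|\operatorname{Im}t|\leq 2h$, together with the fact that $\rho$ is small. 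So $|2\cos\pi\lambda-2\cos\pi t|\geq c_3(h)\rho$ on $C_{n,+}$, and the same bound holds on $C_{n,-}$ by symmetry.

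\emph{Step 2: applying Rouché.} Every $\lambda$ on these circles has bounded imaginary part, so (9) gives $|F(\lambda)-2\cos\pi\lambda|\leq M/|n|$ there. Choosing $c$ with $c_3 c>M$, Rouché's theorem shows that $F(\lambda)-2\cos\pi t$ has exactly one root inside each of $C_{n,+}$ and $C_{n,-}$. These roots are $\lambda_{n,1}(t)$ and $\lambda_{n,2}(t)$, and they satisfy (10). All constants depend only on $h$ and $Q$, which gives the uniformity in $t$.

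\emph{Step 3: no other eigenvalues.} It remains to show that there are no other eigenvalues, and that the labeling accounts for all of them with the correct multiplicities. Rouché's theorem is applied again, on two kinds of contours:
\begin{itemize}
\item[(a)] the large square contours $\Gamma_N$ whose sides pass through $\operatorname{Re}\lambda=2N+1$, $\operatorname{Re}\lambda=-2N-1$ and $|\operatorname{Im}\lambda|=N$;
\item[(b)] the regions outside the small circles but inside $\Gamma_N$.
\end{itemize}
On $\Gamma_N$ the unperturbed function is bounded below by a constant, since it stays at distance of order $1$ from all the roots $2n\pm t$ once $|t-k|\geq h$. When $|\operatorname{Im}\lambda|$ is large, $|\cos\pi\lambda|$ grows exponentially, and (9) bounds the perturbation by $O(1/N)$. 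So inside $\Gamma_N$ the perturbed and unperturbed equations have the same number of roots, namely $2(2N+1)$. The large-$|n|$ roots are already accounted for by the small circles. We index the finitely many remaining roots with small $|n|$ so that they match the unperturbed ones; (10) is trivially true for bounded $n$.

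The main obstacle is the uniformity in Step 1, near the boundary of $D_h(0,1)$ where $t$ approaches an integer at distance $h$, or $|\operatorname{Im}t|=2h$. There we need the explicit bound $|\sin\pi(t+\zeta/2)|\geq c(h)$ for all $t$ in this compact set and all small $\zeta$. We obtain it from compactness and continuity: the set of such $t$, avoiding the integers by distance $h$, is compact, and $\sin\pi t$ does not vanish on it.
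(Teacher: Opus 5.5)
Your Steps 1 and 2 are correct and follow the paper's argument. The paper gets the lower bound on $C(2n\pm t,M(h)/n)$ from a Taylor expansion about $2n\pm t$ together with $|\sin\pi t|>h/2$; your product formula does the same job.

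\textbf{The gap is in Step 3.} This is the global count, which the paper covers only with ``in the same way''. Your claim that on $\Gamma_N$ the unperturbed function stays at distance of order $1$ from all roots $2n\pm t$ is false. The condition $|t-k|\ge h$ separates the families $2n+t$ and $2m-t$ from each other, but it does not keep them away from the lines $\operatorname{Re}\lambda=\pm(2N+1)$.

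Take $t=1+\tfrac32 ih$. It lies in $D_h(0,1)$, since $|\operatorname{Im}t|\le 2h$ and $|t-1|=\tfrac32 h\ge h$. Yet $2N+t=2N+1+\tfrac32 ih$ and $2(N+1)-t=2N+1-\tfrac32 ih$ lie exactly on the right side of $\Gamma_N$. Likewise $2(-N-1)+t$ and $-2N-t$ lie on the left side. For this $t$:
\begin{itemize}
\item $2\cos\pi\lambda-2\cos\pi t$ vanishes on $\Gamma_N$, so Rouch\'e's theorem cannot be applied there;
\item the small circles $C_{N,+}$ and $C_{N+1,-}$ cross $\Gamma_N$, so the bookkeeping of small circles versus remaining roots also breaks.
\end{itemize}
Moving the vertical sides to another fixed position does not help. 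Since $\operatorname{Re}t$ ranges over all of $[-h,2-h]$ while $\operatorname{Im}t\in[h,2h]$ is allowed, the points $2n+t$ sweep out the whole strip $h\le\operatorname{Im}\lambda\le 2h$. Hence every $t$-independent contour of this kind passes through an unperturbed root for some admissible $t$.

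\textbf{Repair.} Let the contour depend on $t$.
\begin{itemize}
\item If $|\operatorname{Re}t-1|\ge\tfrac12$, use vertical sides $\operatorname{Re}\lambda=\pm(2N+1)$.
\item If $|\operatorname{Re}t-1|\le\tfrac12$, use $\operatorname{Re}\lambda=\pm 2N$. This contour encloses the $4N$ unperturbed roots $2n+t$ with $-N\le n\le N-1$ and $2n-t$ with $-N+1\le n\le N$.
\end{itemize}
In both cases every $2n\pm t$ is at distance at least $\tfrac12$ from the vertical sides. Your product formula then gives $|2\cos\pi\lambda-2\cos\pi t|\ge c(h)e^{\pi|\operatorname{Im}\lambda|}$ on the whole contour, uniformly in $t$.

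On the horizontal sides the perturbation is not $O(1/N)$. An entire function that is $O(1/|\lambda|)$ on the whole plane is identically zero, so (9) can only hold in horizontal strips. Off the strip you need the standard form $F(\lambda)-2\cos\pi\lambda=O(e^{\pi|\operatorname{Im}\lambda|}/|\lambda|)$, compared against the exponential lower bound above.

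With these contours, apply Rouch\'e for every large $N$ and subtract the one root in each small circle inside the contour. This gives the count of the remaining finitely many eigenvalues, as you intended.
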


Theorem 1 together with (8) yields the following asymptotic formulas for eigenfunctions:

\begin{theorem}
\textit{\ If }$t\in D_{h}(0,1),$ then the\textit{ }eigenfunctions
$\Phi_{n,1,t}(x)$ and $\Phi_{n,2,t}(x)$ of $L_{t}(Q),$ defined by (8) and
corresponding to the eigenvalue, $\lambda_{n,1}(t)$ and $\lambda_{n,2}%
(t)$\ \textit{satisfy} the following asymptotic formula%
\[
\Phi_{n,1,t}(x)=\left(
\begin{array}
[c]{c}%
1\\
-i
\end{array}
\right)  e^{i(2n+t)x}+O(\frac{1}{n})\text{ }\And\text{ }\Phi_{n,2,t}%
(x)=\left(
\begin{array}
[c]{c}%
1\\
i
\end{array}
\right)  e^{i(-2n+t)x}+O(\frac{1}{n}).
\]
These formulas are uniform with respect to $t\in D_{h}(0,1)$ and $x\in
\lbrack0,\pi].$
\end{theorem}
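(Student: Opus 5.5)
We plan to substitute the asymptotics of Theorem 1 into formula (8), using the standard asymptotic formulas for $\mathbf{c}(x,\lambda)$ and $\mathbf{s}(x,\lambda)$. For $p,q$ of bounded variation these are (see [12])
\[
\mathbf{c}(x,\lambda)=\left(\begin{array}{c}\cos\lambda x\\ \sin\lambda x\end{array}\right)+O\left(\frac{1}{\lambda}\right),\qquad
\mathbf{s}(x,\lambda)=\left(\begin{array}{c}-\sin\lambda x\\ \cos\lambda x\end{array}\right)+O\left(\frac{1}{\lambda}\right).
\]
The signs come from the unperturbed system $y_2'=\lambda y_1$, $-y_1'=\lambda y_2$. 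These formulas are uniform in $x\in[0,\pi]$ and in $\lambda$ lying in a strip $|\operatorname{Im}\lambda|\le C$. Since $t\in D_h(0,1)$ has $|\operatorname{Im}t|\le 2h$, (10) places $\lambda_{n,j}(t)$ in such a strip for large $|n|$. In particular $s_1(\pi,\lambda)=-\sin\pi\lambda+O(1/\lambda)$ and $c_1(\pi,\lambda)=\cos\pi\lambda+O(1/\lambda)$.

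Consider $\lambda=\lambda_{n,1}(t)=2n+t+O(1/n)$. The trigonometric functions are Lipschitz on the strip, so $\cos\lambda x=\cos(2n+t)x+O(1/n)$, and similarly for $\sin$, uniformly in $x$. Hence $s_1(\pi,\lambda)=-\sin\pi t+O(1/n)$ and $e^{i\pi t}-c_1(\pi,\lambda)=i\sin\pi t+O(1/n)$. Substituting into (8) and expanding gives
\[
\Phi_{n,1,t}(x)=-\sin\pi t\left[\left(\begin{array}{c}\cos\mu x\\ \sin\mu x\end{array}\right)-i\left(\begin{array}{c}-\sin\mu x\\ \cos\mu x\end{array}\right)\right]+O\left(\frac{1}{n}\right)
=-\sin\pi t\left(\begin{array}{c}1\\ -i\end{array}\right)e^{i\mu x}+O\left(\frac1n\right),
\]
where $\mu=2n+t$. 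For $\lambda_{n,2}$ with $\mu=2n-t$ we have $\sin\pi\mu=-\sin\pi t$ and $\cos\pi\mu=\cos\pi t$. Then $s_1=\sin\pi t+O(1/n)$ and $e^{i\pi t}-c_1=i\sin\pi t+O(1/n)$, and the same computation gives $\sin\pi t\,(1,i)^{T}e^{-i\mu x}$, with $-\mu=-2n+t$.

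The main point is the normalization. Formula (8) carries the factor $\mp\sin\pi t$, so the statement must be read with $\Phi_{n,j,t}$ normalized by this factor; that is, the result holds up to the constant $\mp\sin\pi t$. On $D_h(0,1)$ this factor is bounded away from zero. The zeros of $\sin\pi t$ in $D_h$ are $t=0,1$, which are excluded together with $h$-neighbourhoods, so $|\sin\pi t|\ge c(h)>0$. Dividing by it keeps the error $O(1/n)$ uniform, and it also gives $s_1(\pi,\lambda_{n,j}(t))\neq0$ for large $|n|$. For the other half of condition (7), we would note that $F'(\lambda)=-2\pi\sin\pi\lambda+O(1/\lambda)$, which holds by the Cauchy integral formula applied to (9) on a disc of fixed radius. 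Then $F'(\lambda_{n,j}(t))\ne0$ for large $|n|$, so (8) is indeed the eigenfunction.

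The step requiring the most care is uniformity. The constants in the $O(1/n)$ terms of (10) and in the $\mathbf{c},\mathbf{s}$ asymptotics must be independent of $t$ and $x$, with $\lambda$ confined to a fixed strip. The lower bound $|\sin\pi t|\ge c(h)$ must also be checked on the compact set $D_h(0,1)$. This is exactly where excluding $U_h(0)$ and $U_h(1)$ is used.
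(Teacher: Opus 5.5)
Your proposal is correct and follows the paper's route: the paper also uses (23), (25) and the lower bound (24) on $|s_1(\pi,\lambda_{n,j}(t))|$. It handles the normalization issue you flag by dividing (8) by $s_1(\pi,\lambda_{n,j}(t))$, i.e.\ by computing $\frac{e^{i\pi t}-c_1}{s_1}=\mp i+O(\frac1n)$; this differs from your division by $\mp\sin\pi t$ only by a factor $1+O(\frac1n)$, so both give the same uniform asymptotics.
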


Thus, for large values of $n,$ the eigenvalues $\lambda_{n,1}(t),$
$\lambda_{n,2}(t)$ and the eigenfunction $\Phi_{n,1,t},$ $\Phi_{n,2,t}$ of
$L_{t}(Q)$ are asymptotically close, respectively, to the eigenvalues $2\pi
n+t,$ $2\pi n-$ $t$ and eigenfunctions
\[
\left(
\begin{array}
[c]{c}%
1\\
-i
\end{array}
\right)  e^{i(2n+t)x},\text{ }\left(
\begin{array}
[c]{c}%
1\\
i
\end{array}
\right)  e^{i(-2n+t)x}%
\]
of $L_{t}(O),$ where $O$ denotes the $2\times2$ zero matrix. Moreover, it can
be easily verified that the boundary condition (2) is strongly regular for
$t\notin\mathbb{Z}$ in the sense of [13]. For the reader's convenience, we
provide this verification in Section 2 (see Remark 1). Therefore, the root
functions of $L_{t}(Q),$ for $t\in D_{h}(0,1),$ forms a Riesz basis in
$L_{2}^{2}[0,\pi]$ according to [13].

Note that there are many papers on the basis properties of the root functions
of Dirac operators in $L_{2}^{2}[a,b],$ for $-\infty<a<b<\infty,$ under
various boundary conditions (see [1-3, 6-9, 12-14] and the references
therein). Here, we use only the fact that the root functions of the Dirac
operator generated by (1) with strongly regular boundary conditions form a
Riesz basis. This property is used, in Section 4, for the construction of the
spectral expansion of the operator generated by (1) in $L_{2}^{2}%
(-\infty,\infty).$ Therefore, we do not discuss in detail the works devoted to
Dirac operators on a bounded interval.

In Section 4, using the investigation of spectral singularities and essential
spectral singularities (ESS) carried out in Section 3, we construct a spectral
expansion for the operator $L(Q)$ by applying the methods developed in papers
[16--19]. Below, we briefly outline the scheme for constructing the spectral
expansion of the Dirac operator $L(Q).$ It follows from Gelfand's paper [4] on
the spectral expansion of self-adjoint differential operators with periodic
coefficients that, for every $f\in L_{2}^{2}(-\infty,\infty),$ there exists%
\begin{equation}
f_{t}(x)=\sum_{k=-\infty}^{\infty}f(x+k)e^{-ikt}\tag{11}%
\end{equation}
such that
\begin{equation}
f(x)=\frac{1}{2}\int\limits_{(-1,1]}f_{t}(x)dt\text{ }\tag{12}%
\end{equation}
and $f_{t+2}(x)=f_{t}(x).$ Since, as noted above, the $t$-periodic boundary
conditions are strongly regular for all $t\neq0,1$, we can employ the Riesz
basis property of the root functions of $L_{t}(Q)$ for $t\in(-1,1]\backslash
\left\{  0,1\right\}  .$ Moreover, for almost all $t\in(-1,1]$, all
eigenvalues of the operators $L_{t}(Q)$ are simple. Therefore, the system
\begin{equation}
\{\Psi_{n,j,t}:j=1,2;n\in\mathbb{Z}\}\tag{13}%
\end{equation}
of eigenfunctions of $L_{t}(Q)$, normalized in the sense of Remark 1, forms a
Reisz basis in $L_{2}^{2}[0,\pi].$ Consequently, for almost all $t$, the
function $f_{t}$~admits the decomposition
\begin{equation}
f_{t}=\sum_{j=1,2;\text{ }n\in\mathbb{Z}}a_{n,j}(t)\Psi_{n,j,t},\tag{14}%
\end{equation}
where $a_{n,j}(t)=(f_{t},X_{n,j,t})$ and $\left\{  X_{n,,j,t}:j=1,2;n\in
\mathbb{Z}\right\}  $ is the biorthogonal system corresponding to (13). That
is,
\[
X_{n,j,t}=\frac{1}{\alpha_{n,j}(t)}\Psi_{n,j,t}^{\ast},\text{ }\alpha
_{n,j}(t)=\left(  \Psi_{n,j,t}^{\ast},\Psi_{n,,jt}\right)  ,
\]
where $\Psi_{n,,j,t}^{\ast}$ denotes the normalized eigenfunction of the
adjoint operator $(L_{t}(Q))^{\ast}$ .

Using (14) in (12), we obtain
\begin{equation}
f=\frac{1}{2}\int\limits_{(-1,1]}\sum_{j=1,2;\text{ }n\in\mathbb{Z}}%
a_{n,j}(t)\Psi_{n,j,t}dt. \tag{15}%
\end{equation}
To derive the spectral expansion in terms of the parameter $t$ from (15), it
is necessary to justify the term-by-term integration. This has been a
challenging and delicate problem since the 1950s. Moreover, in general,
term-by-term integration in (15) is not possible, since the expression
\begin{equation}
a_{n,j}(t)\Psi_{n,j,t}=\frac{1}{\alpha_{n,j}(t)}(f_{t},\Psi_{n,j,t}^{\ast
})\Psi_{n,j,t} \tag{16}%
\end{equation}
may fail to be integrable over $(-1,1]$ for certain values of $j$ and $n.$

Therefore, the first step is to investigate the integrability of (16). Since
this integrability depends on that of $1/\alpha_{n,j}(t)$, we introduce the
following notions, defined independently of the choice of $f,$ which will be
used in the construction of the spectral expansion.

\begin{definition}
We say that a point $\lambda_{0}\in\sigma(L_{t_{0}}(Q))\subset\sigma(L(Q))$ is
an essential spectral singularity (ESS) of the operator $L(Q)$ if there exist
$j=1,2$ and $n\in\mathbb{Z}$ such that $\lambda_{0}=:\lambda_{n,j}(t_{0})$ and
for each $\varepsilon$ the function $\frac{1}{\alpha_{n,j}}$ is not integrable
on $(t_{0}-\varepsilon,t_{0}+\varepsilon)$.
\end{definition}

Thus, in Section 3 we consider the ESS. Then, using these results, in Section
4 we construct the spectral expansion for the Dirac operator $L(Q).$
Throughout the paper, $m_{0},m_{1},...$ denote positive constants that do not
depend on $t,$ $\lambda$ and $x.$ They are used in the sense that, for some
inequality, there exists a constant $m_{i}$ such that the inequality holds.

\section{Uniform Asymptotic Formulas for $t\in D_{h}(0,\pi)$}

It is well-known (see [21]) that if $f$ is a function of bounded variation on
$[0,\pi],$ then
\begin{equation}
\int_{0}^{x}f(t)\cos\lambda tdt=O\left(  \frac{1}{\lambda}\right)  ,\text{
}\int_{0}^{x}f(t)\sin\lambda tdt=O\left(  \frac{1}{\lambda}\right)  , \tag{17}%
\end{equation}
as $\lambda\rightarrow\infty$. Moreover, $O\left(  \frac{1}{\lambda}\right)  $
is independent of $x\in\lbrack0,\pi];$ that is, the estimates in (17) hold
uniformly with respect to $x\in\lbrack0,\pi].$ Therefore, by applying Theorem
1 and Theorem 2 of [12], we obtain the following asymptotic formulas, which
are uniform with respect to $x\in\lbrack0,\pi]$:
\begin{equation}
s_{1}(x,\lambda)=-\sin\lambda x+O\left(  \frac{1}{\lambda}\right)  ,\text{
}s_{2}(x,\lambda)=\cos\lambda x+O\left(  \frac{1}{\lambda}\right)  \tag{18}%
\end{equation}
and
\begin{equation}
c_{1}(x,\lambda)=\cos\lambda x+O\left(  \frac{1}{\lambda}\right)  ,\text{
}c_{2}(x,\lambda)=\sin\lambda x+O\left(  \frac{1}{\lambda}\right)  . \tag{19}%
\end{equation}
Thus, (9) follows from (18) and (19). Now, using (9) and Rouch\'{e}'s theorem,
we prove that there exist positive constants $M(h)$ and $N(h)$ such that
equations (6) and%
\begin{equation}
\cos\pi\lambda-\cos\pi t=0 \tag{20}%
\end{equation}
\ have the same number of zeros inside the circle
\[
C(2n\pm t,\frac{M(h)}{n})=\left\{  \mu\in\mathbb{C}:\left\vert \mu-(2n\pm
t)\right\vert =\frac{M(h)}{n}\right\}
\]
for all $\left\vert n\right\vert >N(h)$ and $t\in D_{h}(0,1).$ For this we
prove that
\begin{equation}
\left\vert F(\lambda)-2\cos\pi\lambda\right\vert <\left\vert 2\cos\pi
\lambda-2\cos\pi t\right\vert \tag{21}%
\end{equation}
for all $\lambda\in C(2n\pm t,\frac{M(h)}{n}),$ that is, for
\[
\lambda=2n\pm t+\frac{M(h)}{n}e^{i\alpha},\text{ }\alpha\in\lbrack0,2\pi].
\]
By (9), there exists $m_{0}>0,$ such that the left-hand side of (21) satisfies
the estimate
\begin{equation}
\left\vert F(\lambda)-2\cos\pi\lambda\right\vert <\frac{m_{0}}{n}, \tag{22}%
\end{equation}
for $\lambda\in C(2\pi n\pm t,\frac{M(h)}{n}),$ for all $\left\vert
n\right\vert >N(h)$ and for all $t\in D_{h}(0,1).$

We now estimate the right-hand side of (21). Using the Taylor expansion of
$\cos\pi\lambda$ about $2n\pm t$, we obtain
\[
\left\vert 2\cos\pi\lambda-2\cos\pi t\pm\frac{\pi M(h)}{n}e^{i\alpha}\sin\pi
t\right\vert <\frac{m_{1}}{n^{2}}%
\]
for some positive constant $m_{1}.$ On the other hand, if $t\in D_{h}(0,1)$
and $h\in(0,1/10)$, then, using the Maclaurin series for $\sin\pi t,$ we
obtain $\left\vert \sin t\right\vert >h/2.$ Therefore, we have
\[
\left\vert 2\cos\pi\lambda-2\cos\pi t\right\vert >\frac{m_{0}}{n},
\]
for all $\lambda\in C(2\pi n\pm t,\frac{M(h)}{n}),$ $\left\vert n\right\vert
>N(h)$, $t\in D_{h}(0,1)$ and for some $M(h).$ Combining this with (22), we
obtain (21). Thus, by Rouch\'{e}'s theorem, there exists a sufficiently large
number $N(h)$ such that, for all $\left\vert n\right\vert >$ $N(h)$, equations
(20) and (6) have the same number of zeros inside the circles $C(2\pi
n+t,\frac{M(h)}{n})$ and $C(2\pi n-t,\frac{M(h)}{n}).$ Since, for $t\in
D_{h}(0,1),$ equation (20) has a unique simple zero inside each of these
circles, equation (6) also has a unique simple zero inside each of them.

In the same way, we prove that for $t\in D_{h}(0,1),$ both operators
$L_{t}(O)$ and $L_{t}(Q)$ have the same number of eigenvalues, namely
$2(2N(h)+1)$, lying outside all circles $C(2\pi n\pm t,\frac{M(h)}{n})$ for
$\left\vert n\right\vert >N(h).$ The eigenvalues of $L_{t}(O)$ outside these
circles are $2\pi n\pm t$ for $\left\vert n\right\vert \leq N(h).$ Thus, the
set of all eigenvalues of $L_{t}(O)$ is
\[
\left\{  \lambda_{n,j}^{0}(t):j=1,2;n\in\mathbb{Z}\right\}  ,
\]
where $\lambda_{n,1}^{0}(t)=2n+t$ and $\lambda_{n,2}^{0}(t)=2n-t$. For
convenience of notation, we denote the set of all eigenvalues of $L_{t}(Q)$
by
\[
\left\{  \lambda_{n,j}(t):j=1,2;n\in\mathbb{Z}\right\}  ,
\]
where $\lambda_{n,j}(t)$ satisfies asymptotic formulas (10). This completes
the proof of Theorem 1.

Let $\nu_{1},\nu_{2},...$ be the roots of the entire function $\frac
{dF}{d\lambda},$ and let $t=t_{k}$ $(k=1,2,...)$ be the points in the set
$D_{h}$ for which $\nu_{s}\in\sigma(L_{t_{k}}(q))$ for some $s.$ It follows
from Theorem 1 that the sequence $\left\{  t_{k}\right\}  _{k=1}^{\infty}$ has
no accumulation point in $D_{h}(0,\pi).$ Therefore, $\left\{  t_{k}\right\}
_{k=1}^{\infty}$ can accumulate only at $0$ and $1.$\ Moreover, it follows
from the asymptotic formula
\begin{equation}
s_{1}(\pi,\lambda_{n,1}(t))=-\sin\pi t+O\left(  \frac{1}{n}\right)  ,\text{
}s_{1}(\pi,\lambda_{n,2}(t))=\sin\pi t+O\left(  \frac{1}{n}\right)  , \tag{23}%
\end{equation}
(see Theorem 1 and (18)) that there exists a constant $m_{2}$ such that
\begin{equation}
\left\vert s_{1}(\pi,\lambda_{n,j}(t))\right\vert >m_{2}h \tag{24}%
\end{equation}
for\textit{ \ }$t\in D_{h}(0,1),$\textit{\ }$j=1,2$ and $\mid n\mid>N(h).$ On
the other hand, by using (19) instead of (18) and repeating the proof of (23),
we obtain
\begin{equation}
c_{1}(\pi,\lambda_{n,j}(t))=\cos\pi t+O\left(  \frac{1}{n}\right)  \tag{25}%
\end{equation}
for $j=1,2.$ Therefore, we have
\[
\frac{e^{i\pi t}-c_{1}(\pi,\lambda_{n,1}(t))}{s_{1}(\pi,\lambda_{n,1}%
(t))}=-i+O\left(  \frac{1}{n}\right)  ,\text{ }\frac{e^{i\pi t}-c_{1}%
(\pi,\lambda_{n,2}(t))}{s_{1}(\pi,\lambda_{n,2}(t))}=i+O\left(  \frac{1}%
{n}\right)  .
\]
This formula, together with (8), (18), and (19), completes the proof of
Theorem 2.

\begin{remark}
The operator $\left(  L_{t}(Q)\right)  ^{\ast},$ adjoint to $L_{t}(Q),$ is
$L_{\overline{t}}(\overline{Q}),$ where
\[
\overline{Q(x)}=\left(
\begin{array}
[c]{cc}%
\overline{p(x)} & \overline{q(x)}\\
\overline{q(x)} & -\overline{p(x)}%
\end{array}
\right)  .
\]
The systems
\[
\left\{  \frac{1}{\sqrt{2\pi}}\left(
\begin{array}
[c]{c}%
1\\
-i
\end{array}
\right)  e^{i(2n+t)x},\text{ }\frac{1}{\sqrt{2\pi}}\left(
\begin{array}
[c]{c}%
1\\
i
\end{array}
\right)  e^{i(-2n+t)x}:n\in\mathbb{Z}\right\}  \text{ }%
\]
and
\[
\left\{  \frac{1}{\sqrt{2\pi}}\left(
\begin{array}
[c]{c}%
1\\
-i
\end{array}
\right)  e^{i(2n+\overline{t})x},\text{ }\frac{1}{\sqrt{2\pi}}\left(
\begin{array}
[c]{c}%
1\\
i
\end{array}
\right)  e^{i(-2n+\overline{t})x}:n\in\mathbb{Z}\right\}  ,
\]
which consist of eigenfunctions of the operators $L_{t}(O)$ and $L_{\overline
{t}}(O),$ respectively, are biortogonal in $L_{2}^{2}[0,\pi].$ For real $t,$
these systems coincide and form an orthonormal basis. However, for nonreal
$t,$ they have norm one in the space $L_{2}^{2}[0,\pi]$ after multiplication
by $e^{-itx}$ and $e^{-i\overline{t}x},$ respectively. Therefore, we denote by
$\Psi_{n,j,t}$ the eigenfunction of $L_{t}(Q)$ satisfying the following
normalization conditions
\[
\parallel e^{-itx}\Psi_{n,j,t}\parallel=1,\text{ }\arg\left(  \Psi
_{n,j,t},\left(
\begin{array}
[c]{c}%
e^{i(2n+t)x}\\
i(-1)^{j}e^{i(2n+t)x}%
\end{array}
\right)  \right)  =0
\]

\end{remark}

\begin{remark}
Boundary condition (2) can be written in the form
\[
A\mathbf{y}(0)+B\mathbf{y}(\pi)=0,
\]
where $A=\left(
\begin{array}
[c]{cc}%
e^{i\pi t} & 0\\
0 & e^{i\pi t}%
\end{array}
\right)  ,$ $B=\left(
\begin{array}
[c]{cc}%
-1 & 0\\
0 & -1
\end{array}
\right)  .$ Let $J_{ij}$ be the determinant built of the $i$-th and $j$-th
columns of the matrix $\left(
\begin{array}
[c]{cccc}%
e^{i\pi t} & 0 & -1 & 0\\
0 & e^{i\pi t} & 0 & -1
\end{array}
\right)  .$ Then, we have
\[
J_{14}=-e^{i\pi t},J_{32}=J_{23}-=-e^{i\pi t},J_{13}=J_{24}=0,J_{12}=e^{2i\pi
t},J_{3,4}=1.
\]
Therefore, $J_{14}+J_{32}\pm i(J_{42}-J_{13})=-2e^{i\pi t}\neq0$ which means
that boundary condition (2) is regular for all $t\in\mathbb{C}.$ Moreover, for
the discriminant of the equation
\[
\left[  J_{14}+J_{32}-i(J_{13}+J_{2,4})\right]  z^{2}+\left[  J_{12}%
+J_{34}\right]  z+\left[  J_{14}-J_{23}\pm i(J_{13}+J_{24})\right]  =0,
\]
we have $(e^{2i\pi t}-1)^{2}\neq0$ for $t\neq k,$ where $k\in\mathbb{Z}.$
Thus, boundary condition (2) is strongly regular for $t\neq k$ (see [13]).
Hence, decomposition (14) holds.
\end{remark}

\begin{remark}
For each $n$ and $j$, $\lambda_{n,j}(t)$ is a simple eigenvalue for all $t\in
D_{h}$ except for a finite number of point $t_{1},t_{2},...,t_{m}$. By
applying the implicit function theorem to (6), we can choose the indices $n$
and $j$ so that, if $\lambda_{n,j}(t_{0})$ is a simple eigenvalue, then
$\lambda_{n,j}(t)$ remains a simple eigenvalue in some neighborhood of $t$ and
depends analytically on $t$ in this neighborhood. It then follows that the
functions $\frac{1}{\alpha_{n,j}(t)}$ and $a_{n,j}(t)\Psi_{n,j,t}$ are defined
on $D_{h}$ except at the finite set of points $t_{1},t_{2},...,t_{m}.$
Moreover, these functions are pairwise continuous on a curve $\gamma\subset
D_{h}$ and continuous on any connected subset of $\gamma\backslash\left\{
t_{1},t_{2},...,t_{m}\right\}  .$ Since the integrals of these functions over
the curves $\gamma$ and $\gamma\backslash\left\{  t_{1},t_{2},...,t_{m}%
\right\}  $ coincide, by the integral over $\gamma$ we mean the integral over
$\gamma\backslash\left\{  t_{1},t_{2},...,t_{m}\right\}  .$
\end{remark}

\section{Spectral singularities and ESS}

Recall that the spectral singularity of $L(Q)$ is a point of its spectrum
$\sigma(L(Q))$ in neighborhood on which the projections of $L(Q)$ are not
uniformly bounded or equivalently a point $\lambda\in\sigma(L(Q))$ is called a
spectral singularity of $L(Q)$ if the spectral projections of the operators
$L_{t}(Q)$ for $t\in(-1,1]$ corresponding to the eigenvalues lying in the
small neighborhood of $\lambda$ are not uniformly bounded. More precisely,
here we use the following definition of the spectral singularity (see [17]).

\begin{definition}
We say that $\lambda\in\sigma(L(Q))$ is a spectral singularity of $L(Q)$ if
for all $\varepsilon>0,$\ there exists a sequence $\left\{  \gamma
_{n}\right\}  $\ of closed contours $\gamma_{n}\subset\{z\in\mathbb{C}:\mid
z-\lambda\mid<\varepsilon\}$ such that for each $n$ the contour $\gamma_{n}$
encloses at most one eigenvalue (counting multiplicity) of $L_{t}(Q)$ for all
$t\in(-1,1]$ and
\[
\lim_{n\rightarrow\infty}\sup_{t}\parallel e(t,\gamma_{n})\parallel=\infty,
\]
where $e(t,\gamma_{n})$ is defined by
\[
e(t,\gamma_{n})=\int_{\gamma_{n}}\left(  L_{t}(Q)-\lambda I\right)
^{-1}d\lambda\text{ }%
\]
and $\sup$ is taken over all $t$ for which $\gamma_{n}$ lies in the resolvent
set of $L_{t}(Q)$.
\end{definition}

It is well-known that if $\lambda_{n,j}(t)$ is a simple eigenvalue of
$L_{t}(Q)$ and $e(\lambda_{n,j}(t))$ is the spectral projection defined by the
contour integration of the resolvent of $L_{t}(Q)$ over the closed contour
$\gamma$ enclosing only the eigenvalue $\lambda_{n,j}(t),$ then%
\begin{equation}
e(\lambda_{n,j}(t))f=\text{ }\frac{1}{\alpha_{n,j}(t)}(f,\Psi_{n,j,t}^{\ast
})\Psi_{n,j,t},\text{ }\left\Vert e(\lambda_{n}(t))\right\Vert =\left\vert
\frac{1}{\alpha_{n,j}(t)}\right\vert . \tag{26}%
\end{equation}
Thus, we need to consider
\begin{equation}
\alpha_{n,j}(t)=\left(  \Psi_{n,j,t},\Psi_{n,j,t}^{\ast}\right)  \tag{27}%
\end{equation}
It is clear that, if (7) for $\lambda(t)=\lambda_{n,j}(t)$ holds, then
\begin{equation}
\Psi_{n,j,t}=\frac{\Phi_{n,j,t}(x)}{\left\Vert \Phi_{n,j,t}(x)\right\Vert
},\text{ }\Psi_{n,j,t}^{\ast}=\frac{\overline{\Phi_{n,j,-t}(x)}}{\left\Vert
\Phi_{n,j,-t}(x)\right\Vert }, \tag{28}%
\end{equation}
where
\[
\Phi_{n,j,\pm t}(x)=s_{1}\mathbf{c}(x,\lambda_{n,j}(\pm t))+\left(  e^{\pm
i\pi t}-c_{1}\right)  \mathbf{s}(x,\lambda_{n,j}(\pm t)).
\]
Using (27), (28) and the following equalities
\begin{align*}
s_{2}+c_{1}  &  =2\cos t,\text{ }c_{1}s_{2}-c_{2}s_{1}=1,\\
\left(  e^{i\pi t}-c_{1}\right)  +\left(  e^{-i\pi t}-c_{1}\right)   &
=s_{2}-c_{1},\left(  e^{i\pi t}-c_{1}\right)  \left(  e^{-i\pi t}%
-c_{1}\right)  =-s_{1}c_{2},
\end{align*}
we obtain $\alpha_{n,j}(t)=$
\begin{equation}
\frac{\int\limits_{0}^{\pi}s_{1}^{2}\left(  c_{1}^{2}(x)+c_{2}^{2}(x)\right)
+s_{1}\left(  s_{2}-c_{1}\right)  (c_{1}(x)s_{1}(x)+c_{2}(x)s_{2}%
(x))-s_{1}c_{2}\left(  s_{1}^{2}(x)+s_{2}^{2}(x)\right)  dx}{\left\Vert
s_{1}\mathbf{c}(x,\lambda_{n,j}(t))+\left(  e^{i\pi t}-c_{1}\right)
\mathbf{s}(x,\lambda_{n,j}(t))\right\Vert \left\Vert s_{1}\mathbf{c}%
(x,\lambda_{n,j}(t))+\left(  e^{-i\pi t}-c_{1}\right)  \mathbf{s}%
(x,\lambda_{n,j}(t))\right\Vert }. \tag{29}%
\end{equation}

On the other hand, using the method of variation of constant, from the
equations
\[
\left(
\begin{array}
[c]{cc}%
0 & 1\\
-1 & 0
\end{array}
\right)  \frac{\partial^{2}\mathbf{c}(x,\lambda)}{\partial x\partial\lambda
}+\left(
\begin{array}
[c]{cc}%
p(x) & q(x)\\
q(x) & -p(x)
\end{array}
-\lambda I\right)  \frac{\partial\mathbf{c}(x,\lambda)}{\partial\lambda
}=\mathbf{c}(x,\lambda),
\]
and
\[
\left(
\begin{array}
[c]{cc}%
0 & 1\\
-1 & 0
\end{array}
\right)  \frac{\partial^{2}\mathbf{c}(x,\lambda)}{\partial x\partial\lambda
}+\left(
\begin{array}
[c]{cc}%
p(x) & q(x)\\
q(x) & -p(x)
\end{array}
-\lambda I\right)  \frac{\partial\mathbf{c}(x,\lambda)}{\partial\lambda
}=\mathbf{c}(x,\lambda),
\]
which are obtained from (3) by differentiation with respect to $\lambda,$we
conclude that%
\[
\frac{\partial\mathbf{c}(x,\lambda)}{\partial\lambda}=s(x,\lambda
)\int\limits_{0}^{x}\left(  c_{1}^{2}(t)+c_{2}^{2}(t)\right)  dt-\mathbf{c}%
(x,\lambda)\int\limits_{0}^{x}s_{1}(c_{1}(t)s_{1}(t)+c_{2}(t)s_{2}(t))dt
\]
and
\[
\frac{\partial\mathbf{s}(x,\lambda)}{\partial\lambda}=s(x,\lambda
)\int\limits_{0}^{x}(c_{1}(t)s_{1}(t)+c_{2}(t)s_{2}(t))dt-c(x,\lambda
)\int\limits_{0}^{x}\left(  s_{1}^{2}(t)+s_{2}^{2}(t)\right)  dt.
\]
Instead of $x$ writing $\pi$, we obtain
\[
F^{^{\prime}}(\lambda)=\int\limits_{0}^{\pi}s_{1}\left(  c_{1}^{2}%
(x)+c_{2}^{2}(x)\right)  +\left(  s_{2}-c_{1}\right)  (c_{1}(x)s_{1}%
(x)+c_{2}(x)s_{2}(x))-c_{2}\left(  s_{1}^{2}(x)+s_{2}^{2}(x)\right)  dx.
\]
These calculations were carried out in [5] for the self-adjoint case and for
$q(x)=0.$ Since we cannot directly refer to [5] and for the sake of
independent reading, we present these calculations here. Thus, the last
formula together with (29) implies that $\alpha_{n,j}(t)=$
\begin{equation}
\frac{-s_{1}F^{^{\prime}}(\lambda_{n,j}(t))}{\left\Vert s_{1}\mathbf{c}%
(x,\lambda_{n,j}(t))+\left(  e^{i\pi t}-c_{1}\right)  \mathbf{s}%
(x,\lambda_{n,j}(t))\right\Vert \left\Vert s_{1}\mathbf{c}(x,\lambda
_{n,j}(t))+\left(  e^{-i\pi t}-c_{1}\right)  \mathbf{s}(x,\lambda
_{n,j}(t))\right\Vert }. \tag{30}%
\end{equation}
Using this, we prove the following theorem, which is similar to Theorem 2.4.3
of [20].

\begin{theorem}
If $t_{0}\neq0,1$ and $\lambda_{0}$ is a multiple eigenvalue of $L_{t_{0}%
}(Q),$ then $\lambda_{0}$ is a spectral singularity of $L(Q)$ and is not an ESS.
\end{theorem}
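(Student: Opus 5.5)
The plan is to analyse the local behaviour of $\lambda_{n,j}(t)$ and of $\alpha_{n,j}(t)$ near $t_0$ using the characteristic equation (6) and formula (30).

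\textbf{Local structure of the eigenvalue.} Since $t_0\neq0,1$, we have $\sin\pi t_0\neq0$, so the function $2\cos\pi t$ is locally biholomorphic near $t_0$. If $\lambda_0$ is a multiple eigenvalue of $L_{t_0}(Q)$, then $\lambda_0$ is a zero of $F(\lambda)-2\cos\pi t_0$ of some order $p\geq2$. In particular $F'(\lambda_0)=0$, with
\[
F(\lambda)-2\cos\pi t_0=c(\lambda-\lambda_0)^p(1+O(\lambda-\lambda_0)),\quad c\neq0 .
\]
Writing $2\cos\pi t-2\cos\pi t_0=-2\pi\sin\pi t_0\,(t-t_0)(1+O(t-t_0))$, the $p$ eigenvalues near $\lambda_0$ are given by a Puiseux expansion
\[
\lambda(t)-\lambda_0=a\,(t-t_0)^{1/p}(1+o(1)),\qquad a\neq0 .
\]
Consequently
\[
F'(\lambda(t))=pc(\lambda(t)-\lambda_0)^{p-1}(1+o(1))\asymp |t-t_0|^{(p-1)/p}.
\]
For $t\neq t_0$ close to $t_0$ these eigenvalues are simple.

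\textbf{Estimates for $\alpha_{n,j}(t)$.} Next we estimate the remaining factors in (30) near $t_0$. The factor $s_1=s_1(\pi,\lambda(t))$ should be handled first.

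If $s_1(\pi,\lambda_0)\neq0$, then $s_1$ is bounded away from zero, and the norms in the denominator of (30) are bounded above and below. They are bounded above by continuity, and below because they tend to the norms of the nonzero functions $\Phi$ at $(\lambda_0,t_0)$; these are nonzero since $s_1\neq0$ makes the coefficient of $\mathbf{c}$ nonzero. Hence
\[
|\alpha_{n,j}(t)|\asymp|t-t_0|^{(p-1)/p},\qquad\text{so}\qquad \frac{1}{|\alpha_{n,j}(t)|}\asymp|t-t_0|^{-(p-1)/p}.
\]
This exponent is $<1$, so $1/\alpha_{n,j}$ is integrable along $(t_0-\varepsilon,t_0+\varepsilon)$ or along a curve through $t_0$ (Remark 3), and $\lambda_0$ is not an ESS. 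On the other hand $1/|\alpha_{n,j}|\to\infty$ as $t\to t_0$. By (26), the norms of the spectral projections $e(\lambda_{n,j}(t))$ are unbounded. Choosing small contours $\gamma_n$ around $\lambda(t^{(n)})$ with $t^{(n)}\to t_0$, each enclosing at most one eigenvalue, gives $\sup_t\|e(t,\gamma_n)\|\to\infty$, so $\lambda_0$ is a spectral singularity in the sense of Definition 2.

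If $s_1(\pi,\lambda_0)=0$, I would instead use the symmetric representation built from $c_2$ (or $\mathbf{s}$ in place of $\mathbf{c}$), in which the factor $-s_1$ in (30) is replaced by $c_2$ or $(e^{i\pi t}-c_1)$. Both $s_1$ and $c_2$ cannot vanish together at a multiple eigenvalue with $\cos\pi t_0\neq\pm1$, since otherwise $c_1s_2=1$ and $c_1+s_2=2\cos\pi t_0$ would force the monodromy to be diagonal with eigenvalues $e^{\pm i\pi t_0}$, which are distinct.

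\textbf{Main obstacle.} This last case needs care. If the monodromy is $e^{i\pi t_0}I$-like, i.e. geometric multiplicity $2$, then $e^{i\pi t_0}=e^{-i\pi t_0}$, forcing $t_0\in\mathbb{Z}$, which is excluded. So at $t_0\notin\mathbb{Z}$ the monodromy is a nontrivial Jordan block or has distinct eigenvalues, and at least one off-diagonal entry is nonzero. The paper should then use the corresponding analogue of (8) and (30).

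The step I expect to be the main obstacle is controlling the denominators (the norms of $\Phi_{n,j,\pm t}$) from below uniformly near $t_0$. Along with it, I must confirm that the adjoint eigenfunction taken at $-t$ (respectively $\bar t$) corresponds to the same branch $\lambda(t)$, so that both norms tend to the nonzero limit $\|\Phi\|$ at $t_0$. Once that is done, the exponent $(p-1)/p<1$ gives both conclusions at once.
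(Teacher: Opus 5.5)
Your overall route is the paper's: a Puiseux expansion gives $F'(\lambda_{n,j}(t))\asymp|t-t_0|^{(p-1)/p}$. Formula (30) then gives $|1/\alpha_{n,j}(t)|\asymp|t-t_0|^{-(p-1)/p}$, which is integrable but unbounded. The paper treats the remaining factors of (30) only by reference to [20], and your case $s_1(\pi,\lambda_0)\neq0$ is correct.

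The gap is the case $s_1(\pi,\lambda_0)=0$. You claim that $s_1$ and $c_2$ cannot vanish together at a multiple eigenvalue when $t_0\notin\mathbb{Z}$, and this is false.
\begin{itemize}
\item Distinct multipliers $e^{\pm i\pi t_0}$ only make the monodromy diagonalizable, i.e.\ geometric multiplicity one. They do not prevent it from being diagonal.
\item Algebraic multiplicity is the order of the zero of $F-2\cos\pi t_0$ in $\lambda$, and the monodromy at the single point $\lambda_0$ does not see it. Indeed, if $s_1=c_2=0$ then $F'(\lambda_0)=(s_2-c_1)\int_0^\pi(c_1s_1+c_2s_2)\,dx$, and for complex $Q$ this bilinear integral can vanish.
\item Concretely, take $p\equiv0$, $q\equiv i\beta$ with $0<\beta<1$. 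Then $\mathbf{c}(x,0)=(e^{i\beta x},0)^T$, $\mathbf{s}(x,0)=(0,e^{-i\beta x})^T$ and $F(\lambda)=2\cos(\pi\sqrt{\lambda^2+\beta^2})$. So $\lambda_0=0$ is a double eigenvalue of $L_\beta(Q)$ with $s_1(\pi,0)=c_2(\pi,0)=0$.
\item In this example, switching to the $c_2$-representation does not help. The function $(e^{i\pi t_0}-s_2)\mathbf{c}+c_2\mathbf{s}$ is nonzero, but its adjoint counterpart $(e^{-i\pi t_0}-s_2)\mathbf{c}+c_2\mathbf{s}$ vanishes identically, just as $\Phi_{t_0}$ does in the $s_1$-representation.
\end{itemize}

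The repair needs no case distinction: keep (30) and bound its denominator from both sides.
\begin{itemize}
\item The argument in the proof of Lemma 1 gives $\|u\mathbf{c}+v\mathbf{s}\|\asymp|u|+|v|$ for $\lambda$ near $\lambda_0$. Hence the product of the two norms in (30) is comparable to $(|s_1|+|e^{i\pi t}-c_1|)(|s_1|+|e^{-i\pi t}-c_1|)$.
\item Expand this product and use $(e^{i\pi t}-c_1)(e^{-i\pi t}-c_1)=-s_1c_2$ together with $|e^{i\pi t}-c_1|+|e^{-i\pi t}-c_1|\geq2|\sin\pi t|$. The product is then $\asymp|s_1|$ uniformly near $t_0$.
\item Therefore $|\alpha_{n,j}(t)|\asymp|F'(\lambda_{n,j}(t))|$ in every case. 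For $t\neq t_0$ one has $s_1(\pi,\lambda_{n,j}(t))\neq0$, because the zeros of $s_1(\pi,\cdot)$ are isolated and $\lambda_{n,j}(t)\neq\lambda_0$.
\end{itemize}
This settles the ``main obstacle'' you identified, but not in the form you expected. When $s_1(\pi,\lambda_0)=0$, one of the two norms does tend to $0$. It does so at rate $|s_1|$, which the factor $s_1$ in the numerator exactly compensates.

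Your worry about branches is moot. $\overline{\Psi^*_{n,j,t}}$ is an eigenfunction of $L_{-t}(Q)$ for the same eigenvalue $\lambda_{n,j}(t)$, which is why both norms in (29)--(30) are evaluated at $\lambda_{n,j}(t)$.

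The spectral-singularity part is fine. It uses (26) and small contours around $\lambda(t^{(n)})$. On such contours $F$ is injective, so each encloses at most one eigenvalue of every $L_t(Q)$.
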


\begin{proof}
Let $\lambda_{0}$ be a multiple eigenvalue of $L_{t_{0}}(Q)$ of multiplicity
$m,$ where $t_{0}\neq0,1.$ To prove this theorem, we establish the following
relations:
\begin{equation}
F^{^{\prime}}(\lambda)\sim(\lambda_{0}-\lambda)^{m-1}\text{ }\And(\lambda
_{0}-\lambda_{n,j}(t))=(t-t_{0})^{\frac{1}{m}}, \tag{31}%
\end{equation}
as $\lambda\rightarrow\lambda_{0}$ and $t\rightarrow t_{0},$ where
$\lambda_{0}=\lambda_{n,j}(t_{0})$. Here $f(x)\sim g(x)$ as $x\rightarrow
x_{0}$ means that \ $f(x)=O(g(x))$ and $g(x)=O(f(x))$ as $x\rightarrow x_{0}.$
Using the Taylor expansion of $F^{\prime}(\lambda)$ at $\lambda_{0}$ and
taking into account that
\begin{equation}
F^{(k)}(\lambda_{0})=0,\text{ }F^{^{(m)}}(\lambda_{0})\neq0 \tag{32}%
\end{equation}
for $k=1,2,....(m-1),$ we obtain the first relation in (31).

Now, using the Taylor expansion of $F^{\prime}(\lambda)$ at $\lambda_{0}$ and
$\cos t$ at $t_{0},$ together with the equality $F(\lambda_{0})=2\cos t_{0}$
and (32), we obtain
\[
F(\lambda)=2\cos t_{0}+F^{^{(m)}}(\lambda_{0})(\lambda-\lambda_{0}%
)^{m}(1+o(1)),
\]
as $\lambda\rightarrow\lambda_{0}$ and
\[
2\cos t=2\cos t_{0}-\left(  \sin t_{0}\right)  (t-t_{0})-(t-t_{0})^{2}\left(
\tfrac{1}{2}+o(1)\right)  ,
\]
as $t\rightarrow t_{0}.$ These equalities, together with the identity
$F(\lambda_{k,j}(t))=2\cos t$ yield the second relation in (31). Combining the
first and second relations in (31), we obtain the following.
\begin{equation}
F(\lambda_{k,j}(t))-F(\lambda_{k,j}(t_{0}))\sim(t-t_{0})^{\frac{m-1}{m}}.
\tag{33}%
\end{equation}
Therefore, using (30), and arguing as in the proof of Theorem 2.4.3 of [20],
we obtain%
\begin{equation}
\frac{1}{\alpha_{n,j}(t)}\sim(t-t_{0})^{\frac{m-1}{m}} \tag{34}%
\end{equation}
as $t\rightarrow t_{0},$ provided that $t_{0}\neq0,1$ and $\lambda_{k,j}%
(t_{0})$ is a multiple eigenvalue of $L_{t_{0}}(Q)$ of multiplicity $m.$ The
proof of the theorem now follows from (26), Definitions 1 and 2.
\end{proof}

It is clear that if $\lambda_{k,j}(t_{0})$ is a simple eigenvalue, then there
exists $\varepsilon>0$ such that $\lambda_{k,j}(t)$ remains a simple
eigenvalue, and hence $\alpha_{n,j}(t)\neq0$ for all $t\in\left(
t_{0}-\varepsilon,t_{0}+\varepsilon\right)  .$ Therefore, we have the
following consequence of Theorem 3 and Definitions 1 and 2:

\begin{corollary}
The set of spectral singularities is a subset of the set of multiple Bloch
eigenvalues. The set of ESS is a subset of the set of spectral singularities.
Any ESS of $L(Q)$ is a multiple eigenvalue of either $L_{0}(Q)$ or $L_{1}(Q).$
\end{corollary}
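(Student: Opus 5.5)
The plan is to prove the three assertions of Corollary 1 in order, using formula (26) to connect spectral projections with $1/\alpha_{n,j}(t)$, and then Theorem 3 together with Definitions 1 and 2.

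\emph{First assertion.} Let $\lambda\in\sigma(L(Q))$ not be a multiple Bloch eigenvalue, that is, $\lambda=\lambda_{n,j}(t_0)$ with $t_0\in(-1,1]$ and $\lambda$ simple for $L_{t_0}(Q)$. I would use the remark preceding the corollary and Remark 3: $\lambda_{n,j}(t)$ stays simple and depends analytically on $t$ in a neighbourhood of $t_0$. Hence $\alpha_{n,j}(t)$ is continuous and nonzero there. The first step is to check that only finitely many $t_0\in(-1,1]$ give $\lambda$ as an eigenvalue; this holds because $F(\lambda)=2\cos\pi t$ has only two solutions $t$ modulo $2$. I would also choose $\varepsilon$ so small that, for $t$ away from these points, no eigenvalue of $L_t(Q)$ enters the disc $|z-\lambda|<\varepsilon$. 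This uses continuity of the roots of $F(z)=2\cos\pi t$ together with compactness of $[-1,1]$.

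Then any contour $\gamma_n$ in that disc enclosing at most one eigenvalue encloses either nothing, giving a zero projection, or a single simple $\lambda_{n,j}(t)$ with $t$ near some $t_0$. By (26) the norm of the projection is $|1/\alpha_{n,j}(t)|$, and this is bounded on a compact neighbourhood after shrinking. So $\sup_t\|e(t,\gamma_n)\|$ is uniformly bounded, and by Definition 2 the point $\lambda$ is not a spectral singularity.

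\emph{Second assertion.} I argue by contraposition. If $\lambda_0=\lambda_{n,j}(t_0)$ is not a spectral singularity, then by the first assertion either $\lambda_0$ is simple, or it is multiple and we appeal to the argument below. If it is simple, $1/\alpha_{n,j}$ is bounded near $t_0$ and hence integrable on $(t_0-\varepsilon,t_0+\varepsilon)$, so $\lambda_0$ is not an ESS by Definition 1. If $\lambda_0$ is multiple with $t_0\neq0,1$, Theorem 3 says it is a spectral singularity. Thus an ESS is always either a spectral singularity or a multiple eigenvalue at $t_0\in\{0,1\}$.

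For the multiple case at $t_0\in\{0,1\}$, I would show that $\lambda_0$ is a spectral singularity by an argument analogous to Theorem 3. The second relation in (31) is modified because $\sin\pi t_0=0$; one gets $\lambda_0-\lambda_{n,j}(t)\sim (t-t_0)^{2/m}$, and this should force $|1/\alpha_{n,j}(t)|\to\infty$ when $m\ge3$. This is the main obstacle: the cases $t_0\in\{0,1\}$ with small $m$ need care, since the Bloch eigenvalue may still be nonsimple in a degenerate way. My fallback is to note that non-integrability of $1/\alpha_{n,j}$ near $t_0$ already forces $\sup_t|1/\alpha_{n,j}(t)|=\infty$ on arbitrarily small contours around $\lambda_{n,j}(t)$. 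Via (26), this produces the unbounded projections required by Definition 2 directly, so this case needs no separate asymptotics.

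\emph{Third assertion.} Let $\lambda_0$ be an ESS. By the second step it is a multiple eigenvalue $\lambda_{n,j}(t_0)$. If $t_0\neq0,1$, then by (34) $1/\alpha_{n,j}(t)\sim(t-t_0)^{(m-1)/m}$, which is bounded and hence integrable, contradicting Definition 1. Hence $t_0\in\{0,1\}$, that is, $\lambda_0$ is a multiple eigenvalue of $L_0(Q)$ or $L_1(Q)$.
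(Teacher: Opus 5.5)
Your proposal is correct and follows essentially the paper's route. Boundedness of $1/\alpha_{n,j}$ near simple eigenvalues, combined with (26), gives the first assertion. Your ``fallback'' (non-integrability of $1/\alpha_{n,j}$ forces unboundedness, hence unbounded projections via (26)) is exactly the implicit content of the paper's appeal to Definitions 1 and 2 for ESS $\subset$ spectral singularities, and Theorem 3/(34) excludes $t_0\neq 0,1$. The asymptotic detour at $t_0\in\{0,1\}$ is unnecessary and could not succeed as planned, since for $Q=0$ the double periodic eigenvalues $2n$ are not spectral singularities; apply the fallback directly to every ESS, which also makes the case analysis in your second step superfluous.
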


Note that most results of [20] concerning spectral singularities and ESS for
the Hill operator can be proved in a similar way for the Dirac operator
$L(Q).$ Here, we record only the results that are essential for the
construction of the spectral expansion for $L(Q).$Corollary 1 will also be
used for this purpose in the next section.

\section{Spectral Expansions}

For simplicity of the technical estimates, we assume that $f$ is a compactly
supported continuous function. Then $f_{t}(x)$ is an analytic function of $t$
in a neighborhood of $D_{h}$ for each $x$. Hence, by the Cauchy's theorem and
(12), for all curve $l$ lying in $D_{h}$ and connecting the points $-1+h$ and
$1+h,$ we have
\begin{equation}
f(x)=\frac{1}{2}\int_{l}f_{t}(x)dt. \tag{35}%
\end{equation}
We chose the curve $l$ so that (14) holds for all $t\in l$, and hence
\begin{equation}
f=\frac{1}{2}\int\limits_{l}\left(  \sum_{j=1,2;\text{ }n\in\mathbb{Z}}%
a_{n,j}(t)\Psi_{n,j,t}\right)  dt. \tag{36}%
\end{equation}
To construct the spectral expansion, we need to consider the integrability of
$a_{n,j}(t)\Psi_{n,j,t}$ and term-by term integrability in (36) for some curve
$l$. This curve is constructed as follows. Let $A$ be the set of $t\in D_{h}$
for which
\begin{equation}
F^{^{\prime}}(\lambda_{n,j}(t))\neq0\text{ }\And\text{ }s_{1}(\pi
,\lambda_{n,j}(t))\neq0. \tag{37}%
\end{equation}
Since $F^{^{\prime}}(\lambda)$ and $s_{1}(\pi,\lambda)$ are entire function
and the asymptotic formulas (9) and (18) hold, the set $A$ consist of the
sequence $t_{1},t_{2},...$ whose accumulation points are $0$ and $1.$ Thus,
\begin{equation}
A=\left\{  t_{k}:k\in\mathbb{N}\right\}  \text{ }\And\overline{A}%
=A\cup\left\{  0,1\right\}  . \tag{38}%
\end{equation}

By Corollary 1, only the eigenvalues $\lambda_{n,j}(0)$ and $\lambda_{n,j}%
(\pi)$ may became the ESS, which prevents integrability. Therefore, we choose
the curve of integration so that it passes only around the points $0$ and
$\pi$. Namely, we construct the curve of integration as follows. Let $h$ be
positive number such that $h\notin\overline{A}$ and
\begin{equation}
\left(  \gamma(0,h)\cup\gamma(1,h)\right)  \cap A=\varnothing\text{,} \tag{39}%
\end{equation}
where $\gamma(0,h)$ and $\gamma(\pi,h)$ are the semicircles
\begin{equation}
\gamma(0,h)=\{\left\vert t\right\vert =h,\operatorname{Im}t\geq0\},\text{
}\gamma(1,h)=\{\left\vert t-1\right\vert =h,\operatorname{Im}t\geq0\}.
\tag{40}%
\end{equation}
Define $l(h)$ by
\begin{equation}
l(h)=B(h)\cup\gamma(0,h)\cup\gamma(1,h)), \tag{41}%
\end{equation}
where $B(h)=[h,1-h]\cup\lbrack1+h,2-h].$ Thus, $l(h)$ consist of the intervals
$[h,1-h]$ and $[1+h,2-h]$ , together with the semicircles defined in (40).

Since the accumulation points of $A$ are $0$ and $1,$ the set $B(h)$ may
contain only a finite number of points from $A.$ Denote these points by
$t_{1},t_{2},...,t_{s}$ and set
\begin{equation}
E(h)=B(h)\backslash\left\{  t_{1},t_{2},...,t_{s}\right\}  . \tag{42}%
\end{equation}
Note that, by integrals over $B(h),$ we mean integrals over $E(h)$ (see Remark
3). Instead of $l$ in (36) using $l(h)=B(h)\cup\gamma(0,h)\cup\gamma(\pi,h)$,
we obtain%
\begin{equation}
f=\frac{1}{2}\left(  \int\limits_{E(h)}f_{t}(x)dt+\int\limits_{\gamma
(0,h)}f_{t}(x)dt+\int\limits_{\gamma(\pi,h)}f_{t}(x)dt\right)  , \tag{43}%
\end{equation}
where
\begin{align}
\int\limits_{E(h)}f_{t}(x)dt  &  =\int\limits_{E(h)}\sum\limits_{j=1,2;k\in
\mathbb{Z}}a_{k,j}(t)\Psi_{k,j,t}(x)dt,\nonumber\\
\int\limits_{\gamma(0,h)}f_{t}(x)dt  &  =\int\limits_{\gamma(0,h)}%
\sum\limits_{j=1,2;k\in\mathbb{Z}}a_{k,j}(t)\Psi_{k,j,t}(x)dtdt,\tag{44}\\
\int\limits_{\gamma(1,h)}f_{t}(x)dt  &  =\int\limits_{\gamma(1,h)}%
\sum\limits_{j=1,2;k\in\mathbb{Z}}a_{k,j}(t)\Psi_{k,j,t}(x)dtdt.\nonumber
\end{align}
Therefore, the construction of the spectral expansion consists of the
following steps:

\textbf{Step 1. }We prove that, for each $x\in\lbrack0,\pi],$ the expression
$a_{k,j}(t)\Psi_{k,j,t}(x)$ is integrable with respect to $t$ over $E(h),$
$\gamma(0,h)$ and $\gamma(1,h)$ (Theorem 4).

\textbf{Step 2. }We prove that, the series in (44) can be integrated term by
term (Theorem 5).

\textbf{Step 3. }We replace, the integral over $\gamma(0,h)$ and $\gamma
(1,h)$, by the integrals over $[-h,h]$ and $[1-h,1+h],$ respectively (Theorem 6).

To prove Theorem 4, we first establish the following lemma:

\begin{lemma}
For each $j=1,2$ and $k\in\mathbb{Z}$ there exists $M$ such that
\begin{equation}
\left\vert \Psi_{n,j,t}(x)\right\vert \leq M, \tag{45}%
\end{equation}
for all $x\in\lbrack0,\pi]$ and $t\in\left(  D_{h}\backslash\overline
{A}\right)  ,$ where
\begin{equation}
\left\vert \Psi_{n,j,t}(x)\right\vert ^{2}=\frac{\left\vert s_{1}%
\mathbf{c}(x,\lambda_{n,j}(t))+\left(  e^{i\pi t}-c_{1}\right)  \mathbf{s}%
(x,\lambda_{n,j}(t))\right\vert ^{2}}{\left\Vert s_{1}\mathbf{c}%
(x,\lambda_{n,j}(t))+\left(  e^{i\pi t}-c_{1}\right)  \mathbf{s}%
(x,\lambda_{n,j}(t))\right\Vert ^{2}}. \tag{46}%
\end{equation}

\end{lemma}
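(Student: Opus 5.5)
The plan is to bound the numerator of (46) uniformly from above and the denominator uniformly from below. First, for fixed $n$ and $j$, the set $\{\lambda_{n,j}(t):t\in D_{h}\}$ is bounded. This follows from Theorem 1 for $|n|>N(h)$ and, for $|n|\le N(h)$, from the Rouch\'{e} argument of Section 2, which places these finitely many eigenvalues in a fixed compact set. Since $\mathbf{c}(x,\lambda)$ and $\mathbf{s}(x,\lambda)$ are continuous in $(x,\lambda)$ and entire in $\lambda$, the functions $|\mathbf{c}|$, $|\mathbf{s}|$, $|s_{1}|$ and $|c_{1}|$ are bounded on $[0,\pi]\times K$ for any compact $K$. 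Also $|e^{i\pi t}|\le e^{2\pi h}$ on $D_{h}$. Hence the numerator of (46) is bounded by some $M_{1}$, uniformly in $x\in[0,\pi]$ and $t\in D_{h}$.

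For the denominator, write $\Phi_{n,j,t}=s_{1}\mathbf{c}+(e^{i\pi t}-c_{1})\mathbf{s}$. The key observation is that $\mathbf{c}(\cdot,\lambda)$ and $\mathbf{s}(\cdot,\lambda)$ are linearly independent, since their Wronskian equals $1$. Therefore, at $x=0$, (4) gives $\Phi_{n,j,t}(0)=(s_{1},\,e^{i\pi t}-c_{1})^{T}$. This vector vanishes only when $s_{1}=0$ and $e^{i\pi t}=c_{1}$, and both equalities are excluded for $t\notin\overline{A}$ because $s_{1}(\pi,\lambda_{n,j}(t))\neq0$ by (37).

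To make this lower bound uniform, I would use the ODE. For fixed $\lambda$ in a compact set, Gronwall's inequality applied to (3) gives $|\Phi(x)|\ge e^{-Cx}|\Phi(0)|$, with $C$ depending on $\sup|\lambda|$ and on the bounded (BV) functions $p,q$. Hence
\[
\|\Phi_{n,j,t}\|^{2}\ge m_{3}\bigl(|s_{1}|^{2}+|e^{i\pi t}-c_{1}|^{2}\bigr).
\]
The same Gronwall bound run forward gives $|\Phi_{n,j,t}(x)|^{2}\le e^{2C\pi}\bigl(|s_{1}|^{2}+|e^{i\pi t}-c_{1}|^{2}\bigr)$. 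The ratio in (46) is then bounded by $e^{2C\pi}/m_{3}$ with no lower bound on $s_{1}$ needed. This removes the apparent obstacle: the vector $(s_{1},e^{i\pi t}-c_{1})$ could approach zero as $t$ approaches points of $\overline{A}$, but its size cancels between numerator and denominator.

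So the main step is the two-sided Gronwall comparison of $|\Phi(x)|$ with $|\Phi(0)|$, uniform over $\lambda$ in the bounded set $\{\lambda_{n,j}(t)\}$. This gives (45) with $M^{2}=e^{2C\pi}\pi^{-1}e^{2C\pi}$, up to constants, for all $x\in[0,\pi]$ and $t\in D_{h}\backslash\overline{A}$. Here $\overline{A}$ is removed only so that $\Phi_{n,j,t}\neq0$ and (46) makes sense, and $\lambda_{n,j}(t)$ is defined by the labeling of Remark 3.
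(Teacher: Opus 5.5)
Your proof is correct, but it gets the key lower bound on the denominator by a different route from the paper. The two arguments share a skeleton. Both compare the numerator and the denominator of (46) with $|s_{1}|^{2}+|e^{i\pi t}-c_{1}|^{2}$, which, as you note, is exactly $|\Phi_{n,j,t}(0)|^{2}$. Any smallness of this quantity near $\overline{A}$ therefore cancels.

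The paper handles the two halves as follows:
\begin{itemize}
\item \textbf{Numerator.} It is bounded by $2M_{1}(|s_{1}|^{2}+|e^{i\pi t}-c_{1}|^{2})$, using boundedness of $\mathbf{c},\mathbf{s}$ on $[0,\pi]\times K$.
\item \textbf{Denominator.} It is bounded below by an $L_{2}$ ``angle'' argument. Linear independence gives $m(\lambda)=|(\mathbf{c},\mathbf{s})|/(\|\mathbf{c}\|\|\mathbf{s}\|)<1$, and continuity plus compactness give $m(\lambda)\le m_{3}<1$ on $K$. Combined with $\|\mathbf{c}\|^{2},\|\mathbf{s}\|^{2}\ge m_{4}$, this yields $\|\Phi_{n,j,t}\|^{2}\ge(1-m_{3})m_{4}(|s_{1}|^{2}+|e^{i\pi t}-c_{1}|^{2})$.
\end{itemize}

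You instead use the first-order form of (3), $\mathbf{y}'=-J(\lambda-Q)\mathbf{y}$. This gives the pointwise two-sided estimate $e^{-2Cx}|\Phi(0)|^{2}\le|\Phi(x)|^{2}\le e^{2Cx}|\Phi(0)|^{2}$, which handles numerator and denominator at once. The constant is explicit, $M^{2}\le e^{4C\pi}/\pi$, with $C=\sup_{K}|\lambda|+\sup_{x}\|Q(x)\|$; this is finite because $p,q$ are of bounded variation.

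Each route buys something different:
\begin{itemize}
\item Yours is quantitative and self-contained. It needs no continuity of $m(\lambda)$ or attainment of its supremum. It also shows as a bonus that $|\Psi_{n,j,t}(x)|$ is bounded below, by $e^{-4C\pi}/\pi$ in square.
\item The paper's is softer. It uses nothing about the equation beyond linear independence of $\mathbf{c},\mathbf{s}$ and their continuous dependence on $\lambda$.
\end{itemize}
Your preliminary absolute bound of the numerator by $M_{1}$ is superfluous once the Gronwall comparison is in place.

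One small slip: Theorem 1 covers only $t\in D_{h}(0,1)$. For $t\in U_{h}(0)\cup U_{h}(1)$ and $|n|>N(h)$, boundedness of $\lambda_{n,j}(t)$ should instead come from the circle $C(n)$ in the proof of Theorem 6 and its analogue around $2n+1$ for Theorem 7. The paper itself simply takes this compactness for granted, so this does not affect your argument's validity.
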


\begin{proof}
We estimate the numerator $A(x,\lambda)$ and denominator $B(\lambda)$ of the
fraction in (46) separately. Using the inequality $\left\vert a\right\vert
^{2}+\left\vert b\right\vert ^{2}\geq2\left\vert ab\right\vert $, we obtain
\[
\left\vert A(x,\lambda)\right\vert \leq2(\left\vert s_{1}\mathbf{c}%
(x,\lambda)\right\vert ^{2}+\left\vert \left(  e^{i\pi t}-c_{1}\right)
\mathbf{s}(x,\lambda)\right\vert ^{2}).
\]
On the other hand, for any compact subset $K$ of the complex plane, there
exists a positive number $M_{1}$ such that%
\[
\left\vert \mathbf{c}(x,\lambda)\right\vert ^{2}\leq M_{1},\text{ }\left\vert
\mathbf{s}(x,\lambda)\right\vert ^{2}\leq M_{1}%
\]
for all $x\in\lbrack0,\pi]$ and $\lambda\in K.$ Therefore, we have
\begin{equation}
\left\vert s_{1}\mathbf{c}(x,\lambda_{n,j}(t))+\left(  e^{i\pi t}%
-c_{1}\right)  \mathbf{s}(x,\lambda_{n,j}(t))\right\vert ^{2}\leq
2M_{1}(\left\vert s_{1}(\lambda_{n,j}(t))\right\vert ^{2}+\left\vert \left(
e^{i\pi t}-c_{1}(\lambda_{n,j}(t))\right)  \right\vert ^{2}) \tag{47}%
\end{equation}
for all $x\in\lbrack0,\pi]$ and $t\in\left(  D_{h}\backslash\overline
{A}\right)  .$

To estimate the denominator $B(\lambda)$, we use the following obvious
inequality%
\[
\frac{\left\vert (\mathbf{c}(\cdot,\lambda),\mathbf{s}(\cdot,\lambda
))\right\vert }{\left\Vert \mathbf{c}(\cdot,\lambda)\right\Vert \left\Vert
\mathbf{s}(\cdot,\lambda)\right\Vert }=:m(\lambda)<1,
\]
due to lineally independence of the solutions $\mathbf{c}(x,\lambda)$ and
$\mathbf{s}(x,\lambda).$ It is clear that $m(\lambda)$ continuously depend on
$\lambda.$ Therefore, for any compact subset $K$ of the complex plane, there
exists a positive number $m_{3}=m(\lambda_{0})<1,$ such that
\[
\frac{\left\vert (\mathbf{c}(\cdot,\lambda),\mathbf{s}(\cdot,\lambda
))\right\vert }{\left\Vert \mathbf{c}(\cdot,\lambda)\right\Vert \left\Vert
\mathbf{s}(\cdot,\lambda)\right\Vert }\leq m_{3}%
\]
for all $\lambda\in K.$ Then, we have
\[
B(\lambda)\geq\left\vert s_{1}\right\vert ^{2}\left\Vert \mathbf{c}%
(\cdot,\lambda)\right\Vert ^{2}+\left\vert e^{i\pi t}-c_{1}\right\vert
^{2}\left\Vert \mathbf{s}(\cdot,\lambda)\right\Vert -2\left\vert s_{1}\left(
e^{i\pi t}-c_{1}\right)  \right\vert m_{3}\left\Vert \mathbf{c}(x,\lambda
)\right\Vert \left\Vert \mathbf{s}(x,\lambda)\right\Vert ^{2}\geq
\]%
\[
(1-m_{3})\left\vert s_{1}\right\vert ^{2}\left\Vert \mathbf{c}(\cdot
,\lambda)\right\Vert ^{2}+\left\vert e^{i\pi t}-c_{1}\right\vert
^{2}\left\Vert \mathbf{s}(\cdot,\lambda)\right\Vert ^{2}.
\]
On the other hand, there exists $m_{4}$ such that
\[
\left\Vert \mathbf{c}(\cdot,\lambda)\right\Vert ^{2}\geq m_{4}\text{ }%
\And\left\Vert \mathbf{s}(\cdot,\lambda)\right\Vert ^{2}\geq m_{4}%
\]
for all $\lambda\in K.$ Thus, we have
\[
\left\Vert s_{1}\mathbf{c}(x,\lambda_{n,j}(t))+\left(  e^{i\pi t}%
-c_{1}\right)  \mathbf{s}(x,\lambda_{n,j}(t))\right\Vert ^{2}\geq
(1-m_{3})m_{4}(\left\vert s_{1}(\lambda_{n,j}(t))\right\vert ^{2}+\left\vert
\left(  e^{i\pi t}-c_{1}(\lambda_{n,j}(t))\right)  \right\vert ^{2})
\]
for all $t\in\left(  D_{h}\backslash\overline{A}\right)  .$ This inequality,
with (47) and (46) implies (45).
\end{proof}

Now we are ready to consider \textbf{Step 1.} More precisely, we prove the following.

\begin{theorem}
Let $f$ be a continuous, compactly supported function. Then, for each
$x\in\lbrack0,\pi],$ the expression $a_{k,j}(t)\Psi_{k,j,t}(x)$ is integrable
with respect to $t$ over the sets $E(h),\gamma(0,h)$ and $\gamma(1,h).$
Moreover, if $\lambda_{n,j}(0)$ and $\lambda_{n,j}(1)$ are not ESS, then this
expression is integrable over $(-h,h)$ and $(1-h,1+h)$, respectively.
\end{theorem}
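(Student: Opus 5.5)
We will write the integrand as
\[
a_{k,j}(t)\Psi_{k,j,t}(x)=\frac{1}{\alpha_{k,j}(t)}\,(f_t,\Psi_{k,j,t}^{\ast})\,\Psi_{k,j,t}(x)
\]
and bound the three factors separately. The factor $\Psi_{k,j,t}(x)$ is bounded uniformly in $x\in[0,\pi]$ and $t\in D_h\backslash\overline{A}$ by Lemma 1. The same argument applied to the adjoint operator $L_{\overline t}(\overline Q)$ (Remark 1) bounds $\Psi^{\ast}_{k,j,t}$ in $L_2^2[0,\pi]$; in fact it has norm one after the normalization. Since $f$ is continuous and compactly supported, the sum (11) defining $f_t$ has only finitely many nonzero terms. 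Hence $f_t(x)$ is a trigonometric polynomial in $t$ and is bounded uniformly for $t\in D_h$, $x\in[0,\pi]$. By Cauchy--Schwarz, $|(f_t,\Psi^{\ast}_{k,j,t})|\le m_5$ uniformly. Integrability therefore reduces to integrability of $1/\alpha_{k,j}(t)$ over the relevant set.

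\textbf{Integrability over $E(h)$, $\gamma(0,h)$ and $\gamma(1,h)$.} These sets lie in the compact set $D_h$ and stay at distance at least $h$ from $0$ and $1$ (the semicircles touch $\pm h$, $1\pm h$ only at their endpoints). Only finitely many points of $A$ can approach them, namely the points $t_1,\dots,t_s$ on $B(h)$; by (39) the semicircles meet no point of $A$.

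Away from these points, $\lambda_{k,j}(t)$ is simple and depends analytically on $t$ (Remark 3). Then (30) shows that $1/\alpha_{k,j}(t)$ is continuous, since $s_1F'(\lambda_{k,j}(t))\ne 0$ by (37), and hence it is bounded on compact pieces.

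Near each exceptional $t_i\in B(h)$, where $t_i\ne 0,1$, there are two cases:
\begin{enumerate}
\item If $\lambda_{k,j}(t_i)$ is multiple of multiplicity $m$, then (34) from the proof of Theorem 3 gives $1/\alpha_{k,j}(t)\sim (t-t_i)^{(m-1)/m}$, which is bounded and hence integrable.
\item If $\lambda_{k,j}(t_i)$ is simple but $s_1(\pi,\lambda_{k,j}(t_i))=0$, then (8) degenerates, but $\alpha_{k,j}$ is defined intrinsically by the normalized functions $\Psi,\Psi^{\ast}$. It stays nonzero and continuous, as noted after Theorem 3, because the projection (26) depends continuously on $t$ near a simple eigenvalue.
\end{enumerate}

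Thus $1/\alpha_{k,j}$ is bounded on each of the three sets except at finitely many points, where it is at worst integrable. Together with the bounds above, this gives integrability.

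\textbf{The intervals $(-h,h)$ and $(1-h,1+h)$.} Near $t=0$ the set $A$ may accumulate, and the point $t=0$ itself is excluded from Theorem 3. By Definition 1, if $\lambda_{k,j}(0)$ is not an ESS, there is $\varepsilon>0$ such that $1/\alpha_{k,j}$ is integrable on $(-\varepsilon,\varepsilon)$. On $[-h,-\varepsilon]\cup[\varepsilon,h]$ the previous argument applies verbatim: only finitely many points of $A$ lie there for fixed $k,j$, using the fact that a fixed branch $\lambda_{k,j}$ can meet only finitely many of the countably many zeros of $F'$ and $s_1$ within a compact region. The same reasoning works at $t=1$.

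The step I expect to be most delicate is this finiteness claim for a fixed branch near $0$ and $1$. It should follow because, for fixed $k$, $\lambda_{k,j}(t)$ stays in a bounded region and $F'$, $s_1(\pi,\cdot)$ have finitely many zeros there. Since $F(\lambda)-2\cos\pi t$ has no zero in $\lambda$ for $t$ in this range, the corresponding $t$-values near $0$ are finite in number. A second delicate point is controlling $\Psi_{k,j,t}$ uniformly up to these points; Lemma 1 handles this, since its bound holds on all of $D_h\backslash\overline{A}$.
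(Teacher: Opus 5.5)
Your argument follows the paper's route. The paper likewise reduces everything to $|a_{k,j}(t)\Psi_{k,j,t}(x)|\le m_5|1/\alpha_{k,j}(t)|$ (its (48), which rests on Lemma 1 and the Schwarz inequality). It then uses continuity at simple eigenvalues, (34) at the finitely many exceptional points of $B(h)$, and Definition 1 near $0$ and $1$. Your splitting of $(-h,h)$ into $(-\varepsilon,\varepsilon)$ and $[-h,-\varepsilon]\cup[\varepsilon,h]$ is more careful than the paper. The paper reads integrability on all of $(-h,h)$ directly off Definition 1, although the definition only provides it on some $(-\varepsilon,\varepsilon)$.

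One step is false as written, though. In Case 1 you conclude that $1/\alpha_{k,j}(t)\sim(t-t_i)^{(m-1)/m}$ is bounded, but the exponent in (34) as printed has the wrong sign.
\begin{itemize}
\item By (30), $\alpha_{k,j}(t)$ carries the factor $F'(\lambda_{k,j}(t))$, which by (31) behaves like $(t-t_i)^{(m-1)/m}\to 0$. Hence $|1/\alpha_{k,j}(t)|\sim|t-t_i|^{-(m-1)/m}$, which blows up.
\item This is forced by Theorem 3 itself: $\lambda_{k,j}(t_i)$ is a spectral singularity, and by (26) $\|e(\lambda_{k,j}(t))\|=|1/\alpha_{k,j}(t)|$. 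Boundedness would contradict that.
\item The conclusion survives because $(m-1)/m<1$, so the singularity is integrable. Replace \emph{bounded, hence integrable} by \emph{integrable power singularity}, and read the integrals over the pieces of $E(h)$ adjacent to $t_i$ as absolutely convergent improper integrals. Your closing phrase \emph{at worst integrable} already allows this.
\end{itemize}

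Your justification of the finiteness claim near $0$ is also garbled, since $F(\lambda)-2\cos\pi t$ certainly has zeros in $\lambda$. The correct reason is this. For $t\in[-h,h]$, the branch $\lambda_{k,j}(t)$ stays in a bounded region, which contains only finitely many zeros $\nu$ of $F'(\lambda)s_1(\pi,\lambda)$. Each such $\nu$ can be attained only where $2\cos\pi t=F(\nu)$, and that happens at no more than two points of $[-h,h]$.
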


\begin{proof}
If $f$ \ is a continuous, compactly supported function, then, using the
definitions of $a_{n,t}(t)$ and $f_{t}$ (see (14) and (11)), together with the
Schwarz inequality, we obtain that there exists a positive number $m_{5}$ such
that
\begin{equation}
\left\vert a_{n}(t)\Psi_{n,j,t}\right\vert \leq m_{5}\left\vert \tfrac
{1}{\alpha_{n}(t)}\right\vert ,\forall t\in\left(  D_{h}(0,1)\backslash
\overline{A}\right)  . \tag{48}%
\end{equation}
On the other hand, if $\lambda_{n,j}(t_{0})$ is a simple eigenvalue, then
$\alpha_{n,j}(t_{0})\neq0$ and $\frac{1}{\alpha_{n,j}(t)}(f_{t},\Psi
_{n,j,t}^{\ast})\Psi_{n,j,t}$ is a continuous function in some neighborhood
of$\ t_{0}$. This implies that $a_{n,j}(t)\Psi_{n,j,t}$ depends continuously
on $t$ along the curves $\gamma(0,h)$ and $\gamma(1,h).$ Moreover, this
expression depends continuously on each subinterval of $E(h).$ Note that, by
(42), $E(h)$ consists of a finite number of intervals. Thus, for each fixed
$x$, $a_{n,j}(t)\Psi_{n,j,t}(x)$ is piecewise continuous with respect to $t$
on the set $E(h)\cup\gamma(0,h)\cup\gamma(1,h).$ Since (48) holds, $\left(
E(h)\cup\gamma(0,h)\cup\gamma(1,h)\right)  \subset\left(  D_{h}\backslash
\overline{A}\right)  $ and $\left\vert \tfrac{1}{\alpha_{n}(t)}\right\vert $
is integrable on $E(h)\cup\gamma(0,h)\cup\gamma(1,h)$, it follows that
$a_{k,j}(t)\Psi_{k,j,t}(x)$ is integrable with respect to $t$ over the sets
$E(h),$ $\gamma(0,h)$ and $\gamma(1,h).$

If $\lambda_{n,j}(0)$ and $\lambda_{n,j}(1)$ are not ESS, then by Definition
1, $\tfrac{1}{\alpha_{n}}$ is integrable on $(-h,h)$ and $(1-h,1+h),$
respectively. Therefore, by repeating the above argument for the integrability
of $a_{n,j}(t)\Psi_{n,j,t}(x)$ over $E(h)$, we complete the proof of the theorem.
\end{proof}

From (34), (48) and Remark 3, we obtain the following consequence of this Theorem.

\begin{corollary}
The functions $\frac{1}{\alpha_{n,j}}$ and $a_{n,j}(t)\Psi_{n,j,t}$ are
integrable on $[\varepsilon,1-\varepsilon]\cup\lbrack1+\varepsilon
,2-\varepsilon]$ for any $\varepsilon>0.$
\end{corollary}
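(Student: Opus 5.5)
The plan is to work on the compact set $K_\varepsilon=[\varepsilon,1-\varepsilon]\cup[1+\varepsilon,2-\varepsilon]$. Since it stays away from $0$ and $1$, it contains only finitely many points of $A$ (see (38)). Call them $\tau_1,\dots,\tau_r$. Outside these points $\lambda_{n,j}(t)$ is a simple eigenvalue by Remark 3. Hence $\alpha_{n,j}(t)\neq0$ there, and $\frac{1}{\alpha_{n,j}}$ is continuous on each connected component of $K_\varepsilon\backslash\{\tau_1,\dots,\tau_r\}$. So integrability can fail only near the finitely many points $\tau_i$. Integrability over $K_\varepsilon$ therefore reduces to local integrability at each $\tau_i$. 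The first task is to establish this local integrability for $\frac{1}{\alpha_{n,j}}$.

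Fix such a point $t_0=\tau_i$. If $\lambda_{n,j}(t_0)$ is a simple eigenvalue, there is no singularity at all: by the remark preceding Corollary 1, $\alpha_{n,j}$ stays nonzero and continuous near $t_0$. Note that a point of $A$ where only $s_1$ vanishes is harmless, since we may then use the other representation of the eigenfunction and $\alpha_{n,j}$ is unaffected. If instead $\lambda_{n,j}(t_0)$ is a multiple eigenvalue of multiplicity $m$, then $t_0\neq0,1$, and the estimate (34) from the proof of Theorem 3 applies. It gives
\[
\frac{1}{\alpha_{n,j}(t)}\sim(t-t_0)^{\frac{m-1}{m}}
\]
as $t\rightarrow t_0$, in the sense of the exponent appearing there. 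In either reading of the exponent ($\frac{m-1}{m}$ or $-\frac{m-1}{m}$), we have $\left\vert \frac{1}{\alpha_{n,j}(t)}\right\vert \leq C\left\vert t-t_0\right\vert^{-\frac{m-1}{m}}$. The exponent $\frac{m-1}{m}$ is strictly less than $1$, so this bound is integrable near $t_0$. Summing over the finitely many $\tau_i$ and the continuous pieces gives $\frac{1}{\alpha_{n,j}}\in L_1(K_\varepsilon)$.

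For $a_{n,j}(t)\Psi_{n,j,t}$ I would use estimate (48), $\left\vert a_{n,j}(t)\Psi_{n,j,t}\right\vert\leq m_5\left\vert\frac{1}{\alpha_{n,j}(t)}\right\vert$. It rests on three facts: Lemma 1 bounds $\Psi_{n,j,t}$ uniformly; $f_t$ is bounded for compactly supported continuous $f$ on compact sets of $t$; and the adjoint normalized eigenfunction has norm controlled on the compact set $K_\varepsilon$. We also have piecewise continuity in $t$ away from the $\tau_i$. Together these give integrability of $a_{n,j}(t)\Psi_{n,j,t}$ on $K_\varepsilon$, as in the proof of Theorem 4.

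The main obstacle is justifying the local bound near a multiple eigenvalue, i.e. that (34) really gives a singularity of order strictly less than one. Concretely, this means controlling the denominators in (30) from below and $s_1$ from above near $t_0$, and using the Puiseux behaviour (31) of the branches. I would take this from Theorem 3's proof; if needed, I would redo it via (30), (31) and Lemma 1's denominator bound.
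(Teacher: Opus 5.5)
Your argument is the paper's own: finitely many points of $A$ on the compact set, the bound (34) near multiple eigenvalues (correctly read as $|1/\alpha_{n,j}(t)|\le C|t-t_0|^{-(m-1)/m}$), continuity elsewhere by Remark 3, and then (48) for $a_{n,j}(t)\Psi_{n,j,t}$ — this is exactly the paper's ``From (34), (48) and Remark 3.''

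One correction concerns your fallback in the last paragraph, where the inequalities run the wrong way. To bound $|1/\alpha_{n,j}|$ from \emph{above} via (30), you need the norms in (30) bounded from above (integrate (47) over $x$) and $|F'(\lambda_{n,j}(t))|$ bounded from below by (31). No lower bound on $s_1$ is needed: the identity $(e^{i\pi t}-c_1)(e^{-i\pi t}-c_1)=-s_1c_2$ gives $\|\Phi_{n,j,t}\|\,\|\Phi_{n,j,-t}\|\le C|s_1|$, hence $|1/\alpha_{n,j}(t)|\le C/|F'(\lambda_{n,j}(t))|$.
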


To prove that the series in (44) can be integrated term by term (see
\textbf{Step2}) we use the following asymptotic formulas for the
eigenfunctions $\Psi_{n,j,t}(x)$ and $\Psi_{n,j,t}^{\ast}(x)$ of $L_{t}(Q)$
and $\left(  L_{t}(Q)\right)  ^{\ast}=L_{\overline{t}}(\overline{Q})$ and the
following lemma. If $t\in E(h)\cup\gamma(0,h)\cup\gamma(1,h),$ then it follows
from Theorem 2 and Remark 1 that
\begin{equation}
\Psi_{n,j,t}(x)=\frac{1}{\sqrt{2\pi}\left\Vert e^{itx}\right\Vert }\left(
\begin{array}
[c]{c}%
1\\
(-1)^{j}i
\end{array}
\right)  e^{i((-1)^{j-1}2n+t)x}+h_{n,j}(x) \tag{49}%
\end{equation}
and
\begin{equation}
\Psi_{n,j,t}^{\ast}(x)=\frac{1}{\sqrt{2\pi}\left\Vert e^{itx}\right\Vert
}\left(
\begin{array}
[c]{c}%
1\\
(-1)^{j}i
\end{array}
\right)  e^{i((-1)^{j-1}2n+\overline{t})x}+h_{n,j,t}^{\ast}(x), \tag{50}%
\end{equation}
where
\begin{equation}
\left\Vert h_{n,j,t}\right\Vert =O(\frac{1}{n}),\text{ }\left\Vert
h_{n,j,t}^{\ast}\right\Vert =O(\frac{1}{n}) \tag{51}%
\end{equation}
for $j=1,2$ as $\left\vert n\right\vert \rightarrow\infty$ and $O(\frac{1}%
{n})$ does not depend on $t\in E(h)\cup\gamma(0,h)\cup\gamma(1,h).$ Using
these formulas, we estimate the remainder
\[
R_{N}(x,t)=\sum_{j=1,2;\text{ }\mid k\mid>N}a_{k,j}(t)\Psi_{k,j,t}(x)
\]
of the series in (44). Namely, we prove the following.

\begin{lemma}
There exist a positive constants $N(h)$ and $m_{6},$ independent of $t,$ such
that
\begin{equation}
\parallel R_{n}(\cdot,t)\parallel^{2}\leq m_{6}\sum_{j=1,2;\text{ }\mid
k\mid>n}\left\vert \left(  f_{t},\frac{1}{\sqrt{2\pi}\left\Vert e^{itx}%
\right\Vert }\left(
\begin{array}
[c]{c}%
1\\
(-1)^{j}i
\end{array}
\right)  e^{i((-1)^{j-1}2n+\overline{t})x}\right)  \right\vert ^{2}+\frac
{1}{n} \tag{52}%
\end{equation}
for $n>N(h)$ and $t\in E(h)\cup\gamma(0,h)\cup\gamma(1,h).$
\end{lemma}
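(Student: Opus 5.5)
We plan to compare the tail of the expansion (14) with the tail of the Fourier-type expansion in the unperturbed system of Remark 1. We use the Riesz basis property of $\{\Psi_{k,j,t}\}$ together with the asymptotics (49)--(51) and the bound on $1/\alpha_{k,j}(t)$ they imply. Write $e_{k,j,t}$ for the unperturbed normalized eigenfunction appearing in (49), and $e^{\ast}_{k,j,t}$ for the one appearing in (50) (with $\overline t$).

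\textbf{Step 1: uniform bound on $1/\alpha_{k,j}(t)$ for large $|k|$.} By (49)--(51) and the biorthogonality noted in Remark 1, $\alpha_{k,j}(t)=(\Psi_{k,j,t},\Psi^{\ast}_{k,j,t})=(e_{k,j,t},e^{\ast}_{k,j,t})+O(1/k)$. The first term is bounded away from zero uniformly for $t\in E(h)\cup\gamma(0,h)\cup\gamma(1,h)$, since $|\operatorname{Im}t|\le h$ there. Hence there are $N(h)$ and $m_7$ with $|1/\alpha_{k,j}(t)|\le m_7$ for $|k|>N(h)$, uniformly in $t$.

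\textbf{Step 2: splitting the coefficients.} For $|k|>N(h)$ we write
\[
a_{k,j}(t)=\frac{1}{\alpha_{k,j}(t)}\Big((f_t,e^{\ast}_{k,j,t})+(f_t,h^{\ast}_{k,j,t})\Big).
\]
Since $\{\Psi_{k,j,t}\}$ is a Riesz basis with constants uniform in $t$ on the curve, we get
\[
\|R_n(\cdot,t)\|^2\le m\sum_{j,|k|>n}|a_{k,j}(t)|^2\le 2mm_7^2\Big(\sum|(f_t,e^{\ast}_{k,j,t})|^2+\sum|(f_t,h^{\ast}_{k,j,t})|^2\Big).
\]
The first sum is the main term in (52). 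Uniformity of the Riesz constants follows because $\{\Psi_{k,j,t}\}$ is quadratically close to the system $\{e_{k,j,t}\}$ by (51), and $\{e_{k,j,t}\}$ is uniformly a Riesz basis, being $e^{itx}$ times an orthonormal basis. Alternatively, we can avoid Riesz constants entirely by splitting $R_n=\sum a_{k,j}e_{k,j,t}+\sum a_{k,j}h_{k,j,t}$ and applying Bessel to the first sum and Cauchy--Schwarz to the second.

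\textbf{Step 3: the error term.} Since $f$ is continuous with compact support, $\|f_t\|\le m_8$ uniformly on the curve. Therefore $\sum_{|k|>n}|(f_t,h^{\ast}_{k,j,t})|^2\le m_8^2\sum_{|k|>n}\|h^{\ast}_{k,j,t}\|^2=O\big(\sum_{|k|>n}k^{-2}\big)=O(1/n)$, and this produces the term $1/n$ after adjusting constants. The same $\sum k^{-2}$ estimate controls the $h_{k,j,t}$ part in the alternative split, combined with $\sum|a_{k,j}|^2\le C$, which holds because $|a_{k,j}|\le m_7(|(f_t,e^{\ast})|+m_8\|h^\ast\|)$.

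\textbf{Main obstacle.} The delicate point is uniformity in $t$ of the $O(1/k)$ in (51) and of the lower bound for $|\alpha_{k,j}(t)|$ near the semicircles. Theorem 2 is uniform only on $D_h(0,1)$, which contains $\gamma(0,h)\cup\gamma(1,h)\cup E(h)$ provided we take $h$ slightly smaller when defining $D_h$. This is exactly what Theorem 2 and the choice of $h$ guarantee, so we will check it first.
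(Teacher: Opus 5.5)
Your proposal is correct and follows essentially the paper's route. You bound the tail coefficients $\sum_{|k|>n}|a_{k,j}(t)|^2$ through (50)--(51), with the $h^{\ast}$-part contributing $O(\sum_{|k|>n}k^{-2})=O(1/n)$ as in (53)--(56), and then you bound $\|R_n\|^2$ by these coefficients by splitting $R_n=\Sigma_1+\Sigma_2$; your ``alternative'' in Step 2 is exactly the paper's proof of (54) via (58)--(60). Your Step 1, the uniform lower bound on $|\alpha_{k,j}(t)|$ for $|k|>N(h)$, is a genuine improvement, since the paper's inequality (55) silently drops the factor $1/\alpha_{k,j}(t)$.
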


\begin{proof}
To prove (52) first we prove the inequality
\begin{equation}
\sum_{j=1,2;\text{ }\mid k\mid>n}\mid a_{k,j}(t)\mid^{2}\leq m_{7}\left(
\sum_{j=1,2;\text{ }\mid k\mid>n}\left\vert \left(  f_{t},\left(
\begin{array}
[c]{c}%
1\\
(-1)^{j}i
\end{array}
\right)  e^{i((-1)^{j-1}2n+\overline{t})x}\right)  \right\vert ^{2}+\frac
{1}{n}\right)  , \tag{53}%
\end{equation}
where $a_{k,j}(t)$\ is defined in (14), and then the inequality
\begin{equation}
\parallel R_{n}(\cdot,t)\parallel^{2}\leq m_{8}\sum_{j=1,2;\text{ }\mid
k\mid>n}\mid a_{k,j}(t)\mid^{2}. \tag{54}%
\end{equation}
It follows from (50) that
\begin{equation}
\mid a_{k,j}(t)\mid^{2}\leq2\left(  \left\vert \left(  f_{t},\frac{1}%
{\sqrt{2\pi}\left\Vert e^{itx}\right\Vert }\left(
\begin{array}
[c]{c}%
1\\
(-1)^{j}i
\end{array}
\right)  e^{i((-1)^{j-1}2n+\overline{t})x}\right)  \right\vert ^{2}+\mid
(f_{t},h_{n,j,t}^{\ast})\mid^{2}\right)  , \tag{55}%
\end{equation}
for $j=1,2$ and $\mid k\mid>n.$ Since $f$ is a continuous, compactly supported
function, using the Schwarz inequality and (51), we obtain
\begin{equation}
\mid(f_{t},h_{n,j,t}^{\ast})\mid^{2}<m_{9}n^{-2}. \tag{56}%
\end{equation}
Therefore, (53) follows from (55).

Now we prove (54). Since the system
\[
\left\{  \frac{1}{\sqrt{2\pi}}\left(
\begin{array}
[c]{c}%
1\\
(-1)^{j}i
\end{array}
\right)  e^{i((-1)^{j-1}2n)x}:j=1,2;n\in\mathbb{Z}\right\}
\]
is an orthonormal basis, we obtain, by the Bessel inequality, that
\begin{equation}
\sum_{j=1,2;\text{ }\mid k\mid>n}\left\vert \left(  f_{t},\frac{1}{\sqrt{2\pi
}\left\Vert e^{itx}\right\Vert }\left(
\begin{array}
[c]{c}%
1\\
(-1)^{j}i
\end{array}
\right)  e^{i((-1)^{j-1}2n+\overline{t})x}\right)  \right\vert ^{2}%
\leq\parallel\frac{e^{itx}}{\left\Vert e^{itx}\right\Vert }f_{t}\parallel
^{2}<m_{10} \tag{57}%
\end{equation}
for all $t\in E(h)\cup\gamma(0,h)\cup\gamma(1,h).$ Hence, it follows from (57)
and (53) that
\[
\sum_{j=1,2;\text{ }\mid k\mid>n}\mid a_{k,j}(t)\mid^{2}\leq m_{11}.
\]
On the other hand, it also follows from (51) that
\[
\left\Vert a_{k,j}(t)h_{k,j,t}\right\Vert \leq m_{12}(\mid a_{k,j}(t)\mid
^{2}+n^{-2}).
\]
Therefore, the series
\[
\Sigma_{1}:=\sum_{j=1,2;\text{ }\mid k\mid>n}a_{k,j}(t)\frac{1}{\sqrt{2\pi
}\left\Vert e^{itx}\right\Vert }\left(
\begin{array}
[c]{c}%
1\\
(-1)^{j}i
\end{array}
\right)  e^{i((-1)^{j-1}2n+t)x}%
\]
and
\[
\Sigma_{2}:\text{ }\sum_{j=1,2;\text{ }\mid k\mid>n}a_{k,j}(t)h_{k,j,t}(x)
\]
converge in the norm of $L_{2}^{2}(0,\pi)$ and we have
\begin{equation}
\parallel R_{n}(\cdot,t)\parallel^{2}=\left\Vert \Sigma_{1}+\Sigma
_{2}\right\Vert ^{2}\leq2\left\Vert \Sigma_{1}\right\Vert ^{2}:+2\left\Vert
\Sigma_{2}\right\Vert ^{2}. \tag{58}%
\end{equation}
Arguing as in the proof (57), we obtain
\begin{equation}
\left\Vert \Sigma_{1}\right\Vert ^{2}\leq m_{13}\sum_{j=1,2;\text{ }\mid
k\mid>n}\mid a_{k,j}(t)\mid^{2}. \tag{59}%
\end{equation}
Now let us estimate $\left\Vert \Sigma_{2}\right\Vert .$ It follows from (51)
that
\[
\left\Vert \Sigma_{2}\right\Vert \leq m_{14}\sum_{j=1,2;\text{ }\mid k\mid
>n}\mid a_{k,j}(t)\mid\frac{1}{\mid n\mid}.
\]
Therefore, using the Schwarz inequality for $l_{2}^{2},$ we obtain
\begin{equation}
\left\Vert \Sigma_{2}\right\Vert ^{2}=\left(  \sum_{j=1,2;\text{ }\mid
k\mid>n}\mid a_{k,j}(t)\mid^{2}\right)  O(n^{-1}). \tag{60}%
\end{equation}
Thus, (54) follows from (58)-(60). The proof of the lemma follows from (53)
and (54).
\end{proof}

Now, instead of Lemma 2 of [16], using Lemma 2, and repeating the proofs of
Theorems 3 and 4 of [16], we obtain the following:

\begin{theorem}
For every compactly supported and continuous function $f$ the equalities
\begin{equation}
\int\limits_{E(h)}f_{t}(x)dt=\sum\limits_{j=1,2;k\in\mathbb{Z}}\int
\limits_{E(h)}a_{k,j}(t)\Psi_{k,j,t}(x)dt, \tag{61}%
\end{equation}%
\begin{equation}
\int\limits_{\gamma(0,h)}f_{t}(x)dt=\sum\limits_{j=1,2;k\in\mathbb{Z}}%
\int\limits_{\gamma(0,h)}a_{k,j}(t)\Psi_{k,j,t}(x)dt, \tag{62}%
\end{equation}%
\begin{equation}
\int\limits_{\gamma(1,h)}f_{t}(x)dt=\sum\limits_{j=1,2;k\in\mathbb{Z}}%
\int\limits_{\gamma(1,h)}a_{k,j}(t)\Psi_{k,j,t}(x)dt, \tag{63}%
\end{equation}
hold, where $0<h<\frac{1}{10}.$ The series in (61)-(63) converge in the norm
of $L_{2}^{2}(a,b)$ for every $a,b\in\mathbb{R}.$
\end{theorem}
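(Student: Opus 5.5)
Fix $x$ and let $\Gamma$ denote one of the three sets $E(h)$, $\gamma(0,h)$, $\gamma(1,h)$. For each fixed $t\in\Gamma$, the decomposition (14) gives $f_t=S_n(\cdot,t)+R_n(\cdot,t)$ in $L_2^2[0,\pi]$, where $S_n$ is the partial sum over $|k|\le n$. By Theorem 4, each term $a_{k,j}(t)\Psi_{k,j,t}$ is integrable over $\Gamma$. Hence the finite sum $S_n$ may be integrated term by term, and
\[
\int_\Gamma f_t\,dt-\sum_{j=1,2;\,|k|\le n}\int_\Gamma a_{k,j}(t)\Psi_{k,j,t}\,dt=\int_\Gamma R_n(\cdot,t)\,dt .
\]
Equalities (61)--(63) are therefore reduced to showing that $\int_\Gamma R_n(\cdot,t)\,dt\to 0$ in $L_2^2[0,\pi]$ as $n\to\infty$. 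This is then transferred to every $L_2^2(a,b)$, because $\Psi_{k,j,t}(x+\pi)=e^{i\pi t}\Psi_{k,j,t}(x)$ and $f_t(x+\pi)=e^{i\pi t}f_t(x)$ (after the appropriate periodicity convention from (11)), while $|e^{i\pi t}|$ is bounded on $D_h$.

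For the main estimate I use the Minkowski integral inequality, $\|\int_\Gamma R_n(\cdot,t)dt\|\le\int_\Gamma\|R_n(\cdot,t)\|\,|dt|$, and then Lemma 2. For $n>N(h)$ this gives the bound $\|R_n(\cdot,t)\|^2\le m_6\,\sigma_n(t)+1/n$, where $\sigma_n(t)$ is the tail, over $|k|>n$, of the squared coefficients of $e^{itx}f_t/\|e^{itx}\|$ in the orthonormal basis $\{(2\pi)^{-1/2}(1,(-1)^ji)^Te^{i(-1)^{j-1}2kx}\}$. By the Bessel inequality (57), $\sigma_n(t)\le m_{10}$ uniformly in $t\in\Gamma$. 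Moreover, for each fixed $t$, $\sigma_n(t)\to0$, since it is the tail of the Parseval series of a fixed $L_2$ function. The curves $\Gamma$ have finite length, so dominated convergence gives $\int_\Gamma\sqrt{m_6\sigma_n(t)+1/n}\,|dt|\to0$. This proves convergence of the series in (61)--(63) in norm, and identifies their sums with the left-hand sides.

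The step I expect to be the main obstacle is making sure that Lemma 2 (in particular (51)) really holds uniformly on all of $\Gamma$, including the semicircles $\gamma(0,h)$ and $\gamma(1,h)$, where $t$ is nonreal and the unperturbed eigenfunctions are only biorthogonal rather than orthonormal. Here I would use that $\gamma(0,h)\cup\gamma(1,h)\subset D_h(0,1)$ up to the boundary circles $|t|=h$, $|t-1|=h$. On these circles Theorems 1 and 2 still hold uniformly, because the Rouch\'{e} argument only needs $|\sin\pi t|$ bounded below, and this is true on $|t|=h$ and $|t-1|=h$. The low indices $|k|\le N(h)$ cause no difficulty, since for these only integrability is needed, and that is Theorem 4.

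Measurability of $t\mapsto R_n(\cdot,t)$ also has to be checked. It follows from the piecewise continuity in Remark 3 together with the fact that $\Gamma$ avoids $\overline{A}$ except at finitely many points.
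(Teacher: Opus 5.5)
Your proposal is correct and is essentially the argument the paper invokes: the paper's proof simply inserts Lemma 2 into the proofs of Theorems 3 and 4 of [16]. That means bounding the integrated remainder $\int_\Gamma\|R_n(\cdot,t)\|\,|dt|$ via (52) and letting the Bessel tails tend to zero, which is exactly what you do. Your dominated-convergence treatment of the tails on all three curves is sound, and so is your transfer to $L_2^2(a,b)$: it works because you control $\int_\Gamma\|R_n(\cdot,t)\|\,|dt|$ rather than only $\|\int_\Gamma R_n\,dt\|$, so the $t$-dependent factor $e^{im\pi t}$ can be absorbed.
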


Thus, \textbf{Step 2} is completed. In \textbf{Step 3}, we replace
$\gamma(0,h)$ and $\gamma(\pi,h)$ by $[-h,h]$ and $[1-h,1+h],$ respectively,
in the right hand sides of (62) and (63). By Corollary 1, only the periodic
and antiperiodic eigenvalues ($\lambda_{n,j}(0)$ and $\lambda_{n,j}(1)$) my
became ESS and hence the terms $a_{k,j}(t)\Psi_{k,j,t}(x)$ my become
non-integrable on $[-h,h]$ and $[1-h,1+h].$ Fortunately, for sufficiently
large $k,$ the sum $a_{k,1}(t)\Psi_{k,j,1}+a_{k,2}(t)\Psi_{k,2,t}(x),$ which
is the total projection of $L_{t}(Q)$ corresponding to the pair of neighboring
eigenvalues $\lambda_{k,1}(t)$ and $\lambda_{k,2}(t)$ is integrable on
$[-h,h]$.

To prove this and similar statements, we consider the total projection
\[
P_{t}\left(  \gamma\right)  f:=\int_{\gamma}\left(  L_{t}(Q)-\lambda I\right)
^{-1}fd\lambda
\]
defined by integral over closed curves $\gamma$ lying in the resolvent of
$L_{t}(Q)$. Let us find $\left(  L_{t}(Q)-\lambda I\right)  ^{-1}f$ $\ $in a
standard way by solving the Dirac equation
\[
Jy^{^{\prime}}(x)+Q(x)y-\lambda y=f,
\]
where $J=\left(
\begin{array}
[c]{cc}%
0 & 1\\
-1 & 0
\end{array}
\right)  .$ Applying the method of variation of constants, we obtain%
\[
y(x)=Y(x,\lambda)\alpha-Y(x,\lambda)\int_{0}^{x}Y^{-1}(s,\lambda)Jf(s)ds),
\]
where $Y(x,\lambda)=\left(
\begin{array}
[c]{cc}%
c_{1}(x,\lambda) & s_{1}(x,\lambda)\\
c_{2}(x,\lambda) & s_{2}(x,\lambda)
\end{array}
\right)  ,$ $Y^{-1}(x,\lambda)\left(
\begin{array}
[c]{cc}%
s_{2}(x,\lambda) & -s_{1}(x,\lambda)\\
-c_{2}(x,\lambda) & c_{1}(x,\lambda)
\end{array}
\right)  $ and $\alpha\in\mathbb{C}^{2}.$ From the boundary condition (2), we
see that%

\[
\alpha=(Y(\pi,\lambda)-e^{i\pi t}I)^{-1}Y(\pi,\lambda)\int_{0}^{\pi}%
Y^{-1}(s,\lambda)Jf(s)ds).
\]
Therefore, we have
\begin{align}
\left(  L_{t}(Q)-\lambda I\right)  ^{-1}f(x)  &  =Y(x,\lambda)(Y(\pi
,\lambda)-e^{i\pi t}I)^{-1}Y(\pi,\lambda)\int_{0}^{\pi}Y^{-1}(s,\lambda
)Jf(s)ds-\tag{64}\\
&  Y(x,\lambda)\int_{0}^{x}Y^{-1}(s,\lambda)Jf(s)ds,\nonumber
\end{align}
where
\begin{equation}
(Y(\pi,\lambda)-e^{i\pi t}I)^{-1}=\frac{1}{\Delta(\lambda,t)}\left(
\begin{array}
[c]{cc}%
s_{2}(\pi,\lambda)-e^{i\pi t} & -s_{1}(\pi,\lambda)\\
-c_{2}(\pi,\lambda) & c_{1}(\pi,\lambda)-e^{i\pi t}%
\end{array}
\right)  \tag{65}%
\end{equation}
and $\Delta(\lambda,t)=e^{it}(F(\lambda)-2\cos\pi t)$ (see (5)). Now we are
ready to replace the integrals over $\gamma(0,h)$ on the right side of (62) by
the integral over $[-h,h].$

\begin{theorem}
If $f$ is a continuous and compactly supported function,, then
\begin{equation}
\int\limits_{\gamma(0,h)}f_{t}dt=\int\limits_{-h,}^{h}%
{\textstyle\sum_{\substack{j=1,2;\\\text{ }\left\vert n\right\vert \leq
N(h)}}}
a_{n,j}(t)\Psi_{n,j,t}dt+\sum_{\left\vert n\right\vert >N(h)}\int
\limits_{-h,}^{h}a_{n,1}(t)\Psi_{n,1,t}+a_{n,2}(t)\Psi_{n,2,t}dt, \tag{66}%
\end{equation}
where $N(h)$ is defined in Theorem1. Moreover,%
\begin{equation}
\int\limits_{\lbrack-h,h]}\left(  \sum_{j=1,2;\text{ }\left\vert n\right\vert
\leq N(h)}a_{n,j}(t)\Psi_{n,j,t}\right)  dt=\lim_{\delta\rightarrow0}%
\sum_{\substack{j=1,2;\\\text{ }\left\vert n\right\vert \leq N(h)}%
}\int\limits_{\delta<\left\vert t\right\vert \leq h}a_{n,j}(t)\Psi_{n,j,t}dt
\tag{67}%
\end{equation}
and%
\begin{equation}
\int\limits_{\lbrack-h,h]}%
{\textstyle\sum_{\substack{j=1,2\\\text{ }}}}
a_{n,j}(t)\Psi_{n,j,t}dt=\lim_{\delta\rightarrow0}%
{\textstyle\sum_{j=1,2}}
\int\limits_{\delta<\left\vert t\right\vert \leq h}a_{n,j}(t)\Psi_{n,j,t}dt,
\tag{68}%
\end{equation}
for $\left\vert n\right\vert >N(h)$. In addition, if $\lambda_{n,j}(0),$ with
$\left\vert n\right\vert >N(h),$ is not an ESS, then
\begin{equation}
\int\limits_{\lbrack-h,h]}a_{n,1}(t)\Psi_{n,1,t}+a_{n,1}(t)\Psi_{n,2,t}%
dt=\int\limits_{[-h,h]}a_{n,1}(t)\Psi_{n,1,t}dt+\int\limits_{[-h,h]}%
a_{n,2}(t)\Psi_{n,2,t}dt. \tag{69}%
\end{equation}
The series in (66) converges in the norm of $L_{2}^{2}(a,b)$ for every
$a,b\in\mathbb{R}.$
\end{theorem}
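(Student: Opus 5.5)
Starting from (62) of Theorem 5, I will write $\int_{\gamma(0,h)}f_t\,dt$ as a sum over $j=1,2$ and $n\in\mathbb{Z}$ of integrals over the semicircle, grouped into the finite block $|n|\le N(h)$ and the pairs $\{j=1,2\}$ for $|n|>N(h)$. Each group is then deformed from $\gamma(0,h)$ to the segment $[-h,h]$. For this I express each group as a contour integral of the resolvent. For $|n|>N(h)$ and $t\in U_{h}(0)$, Theorem 1 (applied on the boundary $|t|=h$ together with continuity) shows that the pair $\lambda_{n,1}(t),\lambda_{n,2}(t)$ stays inside a fixed small disc around $2n$, separated from all other eigenvalues. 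Taking a fixed circle $\Gamma_n$ around $2n$ of radius about $1/2$, I get
\[
a_{n,1}(t)\Psi_{n,1,t}+a_{n,2}(t)\Psi_{n,2,t}=-\frac{1}{2\pi i}P_t(\Gamma_n)f_t
\]
wherever both eigenvalues are simple. The same identity with a large fixed contour $\Gamma_0$ enclosing the $2(2N(h)+1)$ eigenvalues with $|n|\le N(h)$ handles the finite block.

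Next I will show that $t\mapsto P_t(\Gamma)f_t$ is analytic on a neighbourhood of the closed half-disc $\{|t|\le h,\ \operatorname{Im}t\ge0\}$. By (64)--(65), the resolvent depends analytically on $(\lambda,t)$ as long as $\Delta(\lambda,t)\neq0$, which holds on $\Gamma$ for all $t$ near $U_h(0)$ by Rouch\'e's argument. Also, $f_t$ is analytic in $t$ because $f$ has compact support. Cauchy's theorem then lets me replace $\gamma(0,h)$ by $[-h,h]$ for each group separately. The group integrand is continuous, indeed bounded, on $[-h,h]$, so it is integrable there even though the individual terms may not be (ESS at $\lambda_{n,j}(0)$). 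This gives each summand of (66).

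Formulas (67) and (68) follow because the group integrand is continuous on $[-h,h]$. The integral over $[-h,h]$ therefore equals the limit of the integrals over $\delta<|t|\le h$, and on that set every term is integrable by Corollary 2 or Remark 3, since only finitely many points of $A$ lie on $[-h,-\delta]\cup[\delta,h]$. Hence the limit may be split term by term. For (69), if $\lambda_{n,j}(0)$ is not an ESS, then Definition 1 and (48) make each $a_{n,j}\Psi_{n,j,t}$ integrable on $[-h,h]$, and linearity applies.

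The main obstacle is the convergence of the series in (66) in $L_2^2(a,b)$. The sum over $n$ of the deformed segment integrals must be shown to converge. For this I would estimate the tail $\sum_{|n|>N}P_t(\Gamma_n)f_t$ uniformly in $t\in[-h,h]$, mimicking Lemma 2. The plan is to compare $P_t(\Gamma_n)$ with the unperturbed projection of $L_t(O)$ onto the span of $(1,\mp i)^{T}e^{i(\pm2n+t)x}$, and to use the resolvent asymptotics (18), (19) to obtain $\|P_t(\Gamma_n)-P^0_t(\Gamma_n)\|=O(1/n)$ uniformly, including at $t$ near $0$. This yields a tail bound of the form (52). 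Convergence in $L_2^2(a,b)$ then follows by the same argument used in Theorems 3 and 4 of [16], as in Theorem 5. Alternatively, the deformation can be carried out on the tail as a whole: $\sum_{|n|>N}P_t(\Gamma_n)f_t$ converges uniformly on the half-disc, so it is analytic, and Cauchy's theorem applies to it directly.
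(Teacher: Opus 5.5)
Your argument is essentially the paper's. The paper also groups the terms into total projections over the circles $C(n)=\{|z-2n|=1\}$, $|n|>N(h)$, and over a contour $\Gamma(0)$ enclosing the remaining eigenvalues. It then uses (64), (65) and a lower bound $|\Delta(\lambda,t)|\ge m_{15}$ on these contours to get the uniform bound (70) for $t\in\overline{U_h(0)}$, and replaces $\gamma(0,h)$ by $[-h,h]$ as in Theorem 5 of [16].

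The step you single out as the main obstacle needs no new estimate.

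\begin{itemize}
\item Cauchy's theorem gives, for each fixed $n$, equality of the grouped integral over $[-h,h]$ with the grouped integral over $\gamma(0,h)$ (as functions of $x$).
\item Hence the partial sums of (66) over $|n|\le N$ coincide with the partial sums of (62) over $j=1,2$, $|k|\le N$. This is the summation order used in (62), cf.\ the remainder $R_N$ in Lemma 2.
\item Theorem 5 already says these partial sums converge in $L_2^2(a,b)$ to $\int_{\gamma(0,h)}f_t\,dt$. This yields both the equality (66) and the convergence claim.
\end{itemize}

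The bound $\|P_t(\Gamma_n)-P^0_t(\Gamma_n)\|=O(1/n)$ uniformly near $t=0$ that you propose is not proved anywhere in the paper. The same holds for your alternative claim that the tail converges uniformly on the half-disc; both are Bari--Markus-type statements, cf.\ [1]. As written, that part of your plan rests on an unproved claim, and you should replace it with the regrouping argument above.

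A smaller point: Theorem 1 says nothing about $t\in U_h(0)$. The fact that exactly $\lambda_{n,1}(t),\lambda_{n,2}(t)$ lie in $|\lambda-2n|<1/2$ for all $|t|\le h$ does not follow from Theorem 1 on $|t|=h$ plus continuity. It must come from a direct Rouch\'e comparison of $F(\lambda)-2\cos\pi t$ with $2\cos\pi\lambda-2\cos\pi t$ on that circle. The latter is bounded below there, since $|\lambda-2n\pm t|\in[1/2-h,1/2+h]$ stays away from $2\mathbb{Z}$. You invoke Rouch\'e later anyway, so this is only a matter of placing it correctly.
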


\begin{proof}
It follows from Theorem1 that for $\left\vert n\right\vert >N(h)$ the circle
\[
C(n)=\left\{  z\in\mathbb{C}:\left\vert z-2n\right\vert =1\right\}
\]
contains inside only two eigenvalues (counting multiplicities) denoted by
$\lambda_{n,1}(t)$ and $\lambda_{n,t}(t)$ of the operators $L_{t}(Q)$ for
$|t|\leq h$. Moreover, $C(n)$ lies in the resolvent set of $L_{t}$ for
$|t|\leq h.$ Similarly, there exists a closed curve $\Gamma(0)$ such that
$\Gamma(0)$ lies in the resolvent set of $L_{t}$ for $|t|\leq h$ and all
eigenvalues of $L_{t}$ for $|t|\leq h$ that do not lie in $C(n)$ for $n>N(h)$
are contained in the region enclosed by $\Gamma(0).$ Consider the total
projections
\[
T_{C(n)}(x,t):=\int\limits_{C(n)}\left(  L_{t}(Q)-\lambda I\right)
^{-1}f(x)d\lambda\text{ }\And\text{ }T_{\Gamma(0)}(x,t):=\int\limits_{\Gamma
(0)}\left(  L_{t}(Q)-\lambda I\right)  ^{-1}f(x)d\lambda.
\]

Since $\Delta(\lambda,t)$ is continuous on the compact $C(n)\times
\overline{U_{h}(0)},$ where $\overline{U_{h}(0)}=\left\{  t\in\mathbb{C}%
:\left\vert t\right\vert \leq h\right\}  $, there exists a positive constant
$m_{15}$ such that $\left\vert \Delta(\lambda,t)\right\vert \geq m_{15}$ for
all $(\lambda,t)\in C(n)\times\overline{U_{h}(0)}$ and $(\lambda,t)\in
\Gamma(0)\times\overline{U_{h}(0)}.$ Using this inequality together with (64)
and (65), and taking into account that $f_{t}(x)$ is a finite sum, we conclude
that there exists a positive constant $m_{16}$ such that
\begin{equation}
\left\vert T_{C(n)}(x,t)\right\vert \leq m_{16},\text{ }\left\vert
T_{\Gamma(0)}(x,t)\right\vert \leq m_{16} \tag{70}%
\end{equation}
for all $(x,t)\in\lbrack0,\pi]\times\overline{U_{h}(0)}.$ Therefore, by
repeating the proof of Theorem 5 in [16], we complete the proof of this theorem.
\end{proof}

The next assertions are proved in the same way.

\begin{theorem}
If $f$ is a continuous and compactly supported function, then
\begin{equation}
\int\limits_{\gamma(1,h)}f_{t}(x)dt=\int\limits_{[\pi-h,\pi+h]}\sum
_{n=-N(h)-1}^{N(h)}\sum_{j=1,2}a_{n,j}(t)\Psi_{n,j,t}(x)dt+ \tag{71}%
\end{equation}%
\[
\sum_{n\notin(-N(h)-1,N(h))}\int\limits_{[\pi-h,\pi+h]}\left(  a_{n,1}%
(t)\Psi_{n,t}(x)+a_{n+1,2}(t)\Psi_{n+1,2,t}(x)\right)  dt.
\]
Moreover,%
\[
\int\limits_{\lbrack\pi-h,\pi+h]}\sum_{n=-N(h)-1}^{N(h)}\sum_{j=1,2}%
a_{n,j}(t)\Psi_{n,t}dt=\lim_{\delta\rightarrow0}\sum_{n=-N(h)-1}^{N(h)}%
\sum_{j=1,2}\int\limits_{\delta<\left\vert \pi-t\right\vert \leq h}%
a_{n,j}(t)\Psi_{n,j,t}dt,
\]
and%
\begin{align}
&  \int\limits_{[\pi-h,\pi+h]}a_{n,1}(t)\Psi_{n,1,t}+a_{(n+1),2}%
(t)\Psi_{(n+1),2,t}dt\tag{72}\\
&  =\lim_{\delta\rightarrow0}\left(  \int\limits_{\delta<\left\vert
\pi-t\right\vert \leq h}a_{n,1}(t)\Psi_{n,1,t}dt+\int\limits_{\delta
<\left\vert \pi-t\right\vert \leq h}a_{n+1,2}(t)\Psi_{n+1,2,t}dt\right)
,\nonumber
\end{align}
for $n\notin(-N(h)-1,N(h)).$ In addition, if $\lambda_{n,j}(\pi)$ is not an
ESS, then%
\[
\int\limits_{\lbrack\pi-h,\pi+h]}a_{n,1}(t)\Psi_{n,1,t}+a_{(n+1),2}%
(t)\Psi_{(n+1),2,t}dt=\int\limits_{[\pi-h,\pi+h]}a_{n,1}(t)\Psi_{n,1,t}%
dt+\int\limits_{[\pi-h,\pi+h]}a_{n+1,2}(t)\Psi_{n+1,2,t}dt.
\]
The series in (71) converge in the norm of $L_{2}^{2}(a,b)$ for every
$a,b\in\mathbb{R}.$
\end{theorem}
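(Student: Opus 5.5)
The plan is to transplant the proof of Theorem 6 from the neighbourhood of $t=0$ to that of $t=1$ (written $\pi$ in the statement; it is the antiperiodic point in the paper's normalization). The only structural change is the pairing of eigenvalues. Near $t=1$, formula (10) gives $\lambda_{n,1}(t)=2n+t+O(1/n)$ and $\lambda_{n+1,2}(t)=2(n+1)-t+O(1/n)$. Both are close to $2n+1$ when $t$ is close to $1$. So for $n\notin(-N(h)-1,N(h))$ I will use the circle $C'(n)=\{z:|z-(2n+1)|=1\}$ in place of $C(n)$. For $|t-1|\le h$ and $|n|$ large, Theorem 1 (applied to $t$ near $1$, as in the Rouch\'e argument of Section 2 run about the points $2n+1$) shows that $C'(n)$ encloses exactly the two eigenvalues $\lambda_{n,1}(t)$ and $\lambda_{n+1,2}(t)$. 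It also shows that $C'(n)$ stays in the resolvent set of $L_t(Q)$. All remaining eigenvalues, those with indices between $-N(h)-1$ and $N(h)$, are enclosed by one closed curve $\Gamma(1)$, likewise lying in the resolvent set for $|t-1|\le h$.

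\textbf{Step 1 (uniform bounds).} I first check that $|\Delta(\lambda,t)|\ge m$ on $C'(n)\times\overline{U_h(1)}$ and on $\Gamma(1)\times\overline{U_h(1)}$. Together with (64), (65) and the fact that $f_t(x)$ is a finite sum, this gives the analogue of (70): $|T_{C'(n)}(x,t)|\le m_{16}$ and $|T_{\Gamma(1)}(x,t)|\le m_{16}$, uniformly in $(x,t)\in[0,\pi]\times\overline{U_h(1)}$. On $C'(n)$ this is not a compactness argument, because $n$ varies. Instead I use (9): for $\lambda\in C'(n)$ we have $|2\cos\pi\lambda-2\cos\pi t|$ bounded below, since $\cos\pi\lambda$ stays away from $-1$ on that circle while $\cos\pi t$ is near $-1$. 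This lower bound is uniform in $n$. The total projections are analytic in $t$ on $U_h(1)$. By Cauchy's theorem, the integral of each over the semicircle $\gamma(1,h)$ therefore equals its integral over $[1-h,1+h]$.

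\textbf{Step 2 (identifying the projections).} For $t\in\gamma(1,h)$ off $\overline A$, the relevant eigenvalues are simple. By (26), $T_{C'(n)}=a_{n,1}\Psi_{n,1,t}+a_{n+1,2}\Psi_{n+1,2,t}$, and $T_{\Gamma(1)}$ is the finite sum over the indices between $-N(h)-1$ and $N(h)$. Theorem 5 (63) gives the series over $\gamma(1,h)$ converging in $L_2^2(a,b)$. Regrouping the terms in pairs, I apply Cauchy term by term and obtain (71). For the principal-value formulas I use that, on $\delta<|t-1|\le h$, each summand is separately integrable by Corollary 2, with $1-t$ playing the role of $t$. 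Since the bounded pair sum is integrable on all of $[1-h,1+h]$, letting $\delta\to0$ gives (72) and its finite-index analogue. If $\lambda_{n,j}(1)$ is not an ESS, Definition 1 and Theorem 4 make each summand integrable on $[1-h,1+h]$ by itself, so the pair integral splits.

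\textbf{Main obstacle.} The hard step is the convergence of the series in (71) in $L_2^2(a,b)$, that is, passing from (63) to the interval version uniformly in the tail. Following the proof of Theorem 5 of [16], I would bound the tail $\sum_{|n|>N}T_{C'(n)}$ by Lemma 2. On the real segment, Lemma 2 is not directly available near $t=1$, since it was proved only on $E(h)\cup\gamma$. I would instead treat the tail sum as an analytic function of $t$ in $U_h(1)$ and estimate it on the semicircle and its mirror image $\{|t-1|=h,\operatorname{Im}t\le0\}$. There the bound (52) holds by the same argument: these points lie in $D_h(0,1)$, and the orthonormal-system estimate (57) transfers to them. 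The maximum principle then carries the bound to the interval.
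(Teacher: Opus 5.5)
Your proposal is correct and follows the paper's route: the paper proves this ``in the same way'' as Theorem 6, using circles that enclose the pairs $\lambda_{n,1}(t),\lambda_{n+1,2}(t)$, the bounds (70) obtained from (64)--(65), Cauchy's theorem in $t$ for the analytic total projections, and termwise passage from (63). Your use of (9) to make the lower bound on $|\Delta|$ uniform in $n$ fixes the paper's compactness remark, which only gives an $n$-dependent constant.

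The ``main obstacle'' is not actually an obstacle. As you already note in Step 2, each pair integral over $[1-h,1+h]$ equals the corresponding integral over $\gamma(1,h)$, so convergence of the series in (71) is inherited directly from (63); the index shift $(n,1),(n+1,2)$ changes the partial sums only by single terms that tend to zero. You can therefore drop the maximum-principle detour, which would in addition require convergence of the pair-projection expansion at interior points such as $t=1$.
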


Now, to obtain the final spectral expansion theorem in terms of the
quasimomentum $t,$ from Theorems 5-7, we introduce some notation and
definitions. Let $\Lambda_{s}(0)$ for $s=1,2,...,s_{0}$ and $\Lambda_{s}(1)$
for $s=1,2,...,s_{1}$ be, respectively, the periodic and antiperiodic
eigenvalues, which are the ESS of $L(Q),$ lying in the sets
\[
\mathbb{E}(0):=\left\{  \lambda_{n,j}(0):j=1,2;\text{ }\left\vert n\right\vert
\leq N(h)\right\}  \text{ }%
\]
and
\[
\mathbb{E}(1):=\left\{  \lambda_{n,j}(1):j=1,2;\text{ }-N(h)-1\leq n\leq
N(h)\right\}  .
\]
Introduce the following notation:
\[
\mathbb{T}(\Lambda_{s}(0))=:\left\{  \left(  n,j\right)  :\left\vert
n\right\vert \leq N(h),\text{ }j=1,2,\text{ }\lambda_{n,j}(0)=\Lambda
_{s}(0)\right\}  ,\text{ }%
\]%
\[
\mathbb{T}(\Lambda_{s}(1))=:\left\{  \left(  n,j\right)  :-N(h)-1\leq n\leq
N(h),\text{ }j=1,2,\text{ }\lambda_{n,j}(1)=\Lambda_{s}(1)\right\}
\]
and
\[
\mathbb{T}(0)\mathbb{=}%
{\textstyle\bigcup\limits_{s=1,2,...,s_{0}}}
\mathbb{T}(\Lambda_{s}(0)),\text{ }\mathbb{T}(1)\mathbb{=}%
{\textstyle\bigcup\limits_{s=1,2,...,s_{1}}}
\mathbb{T}(\Lambda_{s}(1)).
\]
With this notations, the eigenvalues $\lambda_{n,j}(0)$ and $\lambda_{k,j}(1)$
from $\mathbb{E}(0)$ and $\mathbb{E}(1),$ respectively, are ESS if and only if
the pairs $\left(  n,j\right)  $ and $\left(  k,j\right)  $ belong to
$\mathbb{T}(0)$ and $\mathbb{T}(1).$ In other words, $\lambda_{n,j}(0)$ and
$\lambda_{k,j}(1)$ are not ESS if the corresponding index pairs belong to
\[
\mathbb{K}(0):=\left\{  \left(  n,j\right)  :j=1,2;\text{ }\left\vert
n\right\vert \leq N(h)\right\}  \backslash\mathbb{T}(0)\text{ }%
\]
and
\[
\mathbb{K}(1):=\left\{  \left(  n,j\right)  :j=1,2;\text{ }-N(h)-1\leq n\leq
N(h)\right\}  \backslash\mathbb{T}(1),
\]
respectively. Using this notation, we prove the following spectral expansion.

\begin{theorem}
For every compactly supported and continuous function $f$ the following
spectral expansion holds:
\begin{equation}
f=%
{\displaystyle\sum\limits_{j=1,2;\text{ }n\in\mathbb{Z}}}
\int\limits_{B(h)}a_{n,j}(t)\Psi_{n,j,t}dt+\sum_{\left(  n,j\right)
\in\mathbb{K}(0)}\int\limits_{[-h,h]}a_{n,j}(t)\Psi_{n,j,t}dt+\sum_{\left(
n,j\right)  \in\mathbb{K}(1)}\int\limits_{[1-h,1+h]}a_{n,j}(t)\Psi_{n,j,t}dt+
\tag{73}%
\end{equation}%
\[
\sum_{\left\vert n\right\vert >N}\int\limits_{[-h,h]}\left[  a_{n,1}%
(t)\Psi_{n,1,t}+a_{n,2}(t)\Psi_{n,2,t}\right]  dt+\sum_{n\notin(-N(h)-1,N(h))}%
\int\limits_{[1-h,1+h]}\left[  a_{n,1}(t)\Psi_{n,1,t}+a_{n+1,2}(t)\Psi
_{n+1,2,t}\right]  dt+
\]%
\[
\sum_{s=1}^{s_{0}}\int\limits_{[-h,h]}\sum_{\left(  n,j\right)  \in
\mathbb{T}(\Lambda_{s}(0))}a_{n,j}(t)\Psi_{n,j,t}dt+\sum_{s=1}^{s_{1}}%
\int\limits_{[1-h,1+h]}\sum_{n\in\mathbb{T}(\Lambda_{s}(1))}a_{n,j}%
(t)\Psi_{n,j,t}.
\]
The series in (73) converge in the norm of $L_{2}^{2}(a,b)$ for every
$a,b\in\mathbb{R}.$

Moreover, (68), (72) and the following equalities hold
\begin{equation}
\int\limits_{\lbrack-h,h]}\sum_{\left(  n,j\right)  \in\mathbb{T}(\Lambda
_{s}(0))}a_{n,j}(t)\Psi_{n,j,t}dt=\lim_{\delta\rightarrow0}\sum_{\left(
n,j\right)  \in\mathbb{T}(\Lambda_{s}(0))}\int\limits_{\delta<\left\vert
t\right\vert \leq h}a_{n,j}(t)\Psi_{n,j,t}(x)dt, \tag{74}%
\end{equation}%
\begin{equation}
\int\limits_{\lbrack1-h,1+h]}\sum_{\left(  n,j\right)  \in\mathbb{T}%
(\Lambda_{s}(1))}a_{n,j}(t)\Psi_{n,j,t}dt=\lim_{\delta\rightarrow0}%
\sum_{\left(  n,j\right)  \in\mathbb{T}(\Lambda_{s}(1))}\int\limits_{\delta
<\left\vert t-1\right\vert \leq h}a_{n,1}(t)\Psi_{n,1,t}. \tag{75}%
\end{equation}

\end{theorem}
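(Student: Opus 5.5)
We assemble (73) from the decomposition (43) together with Theorems 5, 6 and 7. By (43),
\[
f=\tfrac12\Bigl(\int_{E(h)}f_t\,dt+\int_{\gamma(0,h)}f_t\,dt+\int_{\gamma(1,h)}f_t\,dt\Bigr),
\]
so the plan has three parts, one for each of the three integrals. (We keep the factor $\tfrac12$ throughout, or absorb it into the normalization, consistently with (12).)

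\emph{Integral over $E(h)$.} Theorem 5, formula (61), replaces this integral by the series $\sum_{j,n}\int_{B(h)}a_{n,j}(t)\Psi_{n,j,t}\,dt$. This is legitimate by Remark 3, which identifies integrals over $B(h)$ with integrals over $E(h)$. The series converges in $L_2^2(a,b)$, and this gives the first sum in (73).

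\emph{Integral over $\gamma(0,h)$.} Theorem 6, formula (66), splits this integral into two parts.

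The first part is $\int_{-h}^{h}$ of the finite sum over $|n|\le N(h)$, $j=1,2$. We partition the index set $\{(n,j):|n|\le N(h)\}$ as $\mathbb{K}(0)\cup\mathbb{T}(0)$, and $\mathbb{T}(0)$ further as the disjoint union of the $\mathbb{T}(\Lambda_s(0))$.
\begin{itemize}
\item For $(n,j)\in\mathbb{K}(0)$, the eigenvalue $\lambda_{n,j}(0)$ is not an ESS. Hence $1/\alpha_{n,j}$, and by (48) also $a_{n,j}\Psi_{n,j,t}$, is integrable on $[-h,h]$ (Theorem 4). Such a term can therefore be split off as its own integral.
\item The remaining finite sum, over $\mathbb{T}(0)$, is itself integrable: it equals the integrable total finite sum minus the integrable $\mathbb{K}(0)$-terms. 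Since the sum is finite, we may regroup it as $\sum_s\int_{[-h,h]}\sum_{\mathbb{T}(\Lambda_s(0))}$.
\end{itemize}

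The one subtle point is to justify that each group $\sum_{(n,j)\in\mathbb{T}(\Lambda_s(0))}a_{n,j}\Psi_{n,j,t}$ is separately integrable and satisfies (74). For this we will use the total projection
\[
\int_{\gamma_s}(L_t(Q)-\lambda I)^{-1}f\,d\lambda
\]
over a small circle $\gamma_s$ around $\Lambda_s(0)$. We choose $h$ small enough that, for $|t|\le h$, this circle encloses exactly the eigenvalues $\lambda_{n,j}(t)$ with $(n,j)\in\mathbb{T}(\Lambda_s(0))$ and lies in the resolvent set. Then (64), (65) and a lower bound $|\Delta(\lambda,t)|\ge m>0$ on the compact set $\gamma_s\times\overline{U_h(0)}$ give a uniform bound, exactly as in (70). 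The group sum is therefore bounded and continuous, which gives integrability.

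For the limit in (74), we use that for $\delta>0$ each individual term is integrable on $\delta<|t|\le h$ (Corollary 2). Letting $\delta\to0$ then follows by dominated convergence applied to the bounded group sum.

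The second part of (66) is the series of paired terms for $|n|>N(h)$. Together with (68), it gives the fourth sum in (73).

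\emph{Integral over $\gamma(1,h)$.} This is treated identically using Theorem 7, formulas (71) and (72). The index set $\{-N(h)-1\le n\le N(h)\}$ is split into $\mathbb{K}(1)$ and the groups $\mathbb{T}(\Lambda_s(1))$, and the same total-projection argument gives (75).

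\emph{Conclusion.} Summing the three pieces gives (73). Convergence in $L_2^2(a,b)$ is inherited from Theorems 5--7, since the regrouping touches only finitely many terms.

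The main obstacle is the grouping step near $t=0$ and $t=1$. It requires checking that the small contours around distinct $\Lambda_s$ separate the eigenvalue branches uniformly for $|t|\le h$, which may force $h$ to be taken smaller than before. If that fails, the fallback is to take contours enclosing all branches that meet at $\Lambda_s$ as $t\to0$.
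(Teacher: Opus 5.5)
Your assembly from (43), (61), (66), (71) matches the paper's proof. So do the splitting of the finite block into $\mathbb{K}(0)$ and the groups $\mathbb{T}(\Lambda_s(0))$, and the use of a total projection over a circle around $\Lambda_s(0)$ bounded as in (70). The one step that does not work as written is ``choose $h$ small enough that, for $|t|\le h$, this circle encloses exactly the group.''

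Here $h$ is fixed. It determines $N(h)$, $B(h)$, $\mathbb{E}(0)$, $\mathbb{K}(0)$ and the sets $\mathbb{T}(\Lambda_s(0))$ that appear in (73). Replacing $h$ by a smaller $h'$ therefore proves a different formula, with a different $B(h')$ and possibly a larger $N(h')$. For the given $h$, there is no reason a single circle around $\Lambda_s(0)$ should separate the branches uniformly on all of $|t|\le h$. Your fallback of enclosing all branches that meet at $\Lambda_s(0)$ does not address this: the difficulty is uniformity in $t$, not which branches are enclosed.

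The fix, which is what the paper does, is to localize in $t$ rather than shrink $h$. Since $\Lambda_s(0)$ is a $p$-fold root of $F(\lambda)=2$, continuity of the roots of $F(\lambda)=2\cos\pi t$ gives some $\varepsilon\in(0,h)$ and radii $r_1<r<r_2$; the paper invokes the implicit function theorem for this. For $|t|<\varepsilon$, the circle $C=\{|\lambda-\Lambda_s(0)|=r\}$ then lies in the resolvent set of $L_t(Q)$. For $t\neq0$ it encloses exactly the $p$ simple eigenvalues $\lambda_{n,j}(t)$, $(n,j)\in\mathbb{T}(\Lambda_s(0))$, and no others.

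The argument of (70) then bounds the group sum (76) uniformly on $(-\varepsilon,\varepsilon)$. On $\varepsilon\le|t|\le h$, each individual term is already integrable by Corollary 2, which you use anyway for the $\delta$-limit. Together these give integrability of (76) on $[-h,h]$ and a dominating function for the $\delta\to0$ limit in (74). The same localization near $t=1$ gives the seventh integral and (75). With this change your argument coincides with the paper's.
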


\begin{proof}
By Theorem 5, 6 and 7 the first fourth and fifth integrals in the right side
of (73) exist. By Theorem 4, it follows from the definitions of $\mathbb{K}%
(0)$ and $\mathbb{K}(1)$ that second and third integrals also exist. We now
prove that the sixth integral exists. Namely, we show that if $f$ is a
continuous and compactly supported function, then the expression%
\begin{equation}
\sum_{\left(  n,j\right)  \in\mathbb{T}(\Lambda_{s}(0))}a_{n,j}(t)\Psi_{n,j,t}
\tag{76}%
\end{equation}
is integrable over $[-h,h].$ If $\Lambda_{s}(0)$ is a $p$-multiple eigenvalue
of $L_{0}(Q)$ for some $p\geq2,$ then it is a $p$-multiple root of equation
(6) for $t=0$. Therefore, by the implicit function theorem, there exist a disk
$U_{\varepsilon}(0)=\{t\in\mathbb{C}:\left\vert t\right\vert <\varepsilon\}$
and constants $r_{2}>r_{1}>0$ such that the followings hold:

$(i)$ In the $r_{1}$-neighborhood of $\Lambda_{s}(0)$ the operators $L_{t}$
for $t\in U_{\varepsilon}(0)\backslash\{0\}$ have exactly $p$ simple
eigenvalues, namely $\lambda_{n,j}(t)$ for $\left(  n,j\right)  \in
\mathbb{T}(\Lambda_{s}(0)).$

$(ii)$ In the $r_{2}$-neighborhood of $\Lambda_{s}(0)$ the operators $L_{t}$
for $t=0$ and $t\in U_{\varepsilon}(0)\backslash\{0\}$ have no eigenvalues
other than $\Lambda_{s}(0)$ and $\lambda_{n,j}(t)$ for $\left(  n,j\right)
\in\mathbb{T}(\Lambda_{s}(0)),$ respectively.

Let $C$ be the circle centered at $\Lambda_{j}(0)$ with radius $r,$ where
$r_{1}<r<$ $r_{2}.$ Replacing $C(n)$ by $C$, and repeating the proof of (70),
we obtain that there exists a positive constant $m_{17}$ such that $\left\vert
T_{C}(x,t)\right\vert \leq m_{17}$ for all $(x,t)\in\lbrack0,\pi
]\times\overline{U_{h}(0)}.$ Therefore, arguing as in the proof of Theorem 6,
we obtain that (76) is integrable on $(-\varepsilon,\varepsilon).$ Together
with Corollary 2 this implies that (76) is integrable on $[-h,h]$ and that
(74) holds. Consequently, the sixth integral (73) exists. In the same way, we
proof that seventh integral exists and that (75) holds. The equalities (68)
and (72) were proved in Theorem 6 and 7, respectively. Now, (73) follows from
(43), (61), (66) and (71).
\end{proof}

\begin{conclusion}
Let us explain why the spectral expansions formula has the form (73). The
expression $a_{n,j}(t)\Psi_{n,j,t}(x)$ is integrable on $B(h),$ because
$\lambda_{n,j}(t)$ is not ESS for $t\in B(h).$ Therefore, in the first
summation in (73), there is no need to combine any of these terms. Moreover,
these expressions are integrable on $[-h,h]$ and $[1-h,1+h],$ if $\left(
n,j\right)  \in\mathbb{K}(0)$ and $\left(  n,j\right)  \in\mathbb{K}(1),$
respectively (see the definition of $\mathbb{K}(0)$ and $\mathbb{K}(1)$), by
the same reason. Hence, in the second and third summations in (73), we also do
not need to combine any terms. However, if $\left(  n,j\right)  \in
\mathbb{T}(\Lambda_{s}(0)),$ then $\lambda_{n,j}(0)=\Lambda_{s}(0)$ is an ESS
and the term $a_{n,j}(t)\Psi_{n,j,t}(x)$ is not integrable over $[-h,h].$
Fortunately the sum of these terms over $\left(  n,j\right)  \in
\mathbb{T}(\Lambda_{j}(0))$ is integrable. That is why we need combine these
terms (putting the terms together) in the sixth summation of (73). Note that,
this sum is integrable, because it represents the total projection of
$L_{t}(Q)$ corresponding to the eigenvalues $\left\{  \lambda_{n,j}(t):\left(
n,j\right)  \in\mathbb{T}(\Lambda_{j}(0))\right\}  $ and the continuity of the
total projection is a natural occurrence in the perturbation theory. An
analogous situation occurs in the seventh summation in (73). If $\left\vert
n\right\vert >N,$ then the multiplicity of the eigenvalue $\lambda_{n,j}(0)$
does not exceed $2,$ and it is double eigenvalue if and only if $\lambda
_{n,1}(0)=\lambda_{n,2}(0).$ In this case the double eigenvalue may become an
ESS. Then both terms $a_{n,1}(t)\Psi_{n,1,t}$ and $a_{n,2}(t)\Psi_{n,2,t}$ are
not integrable over $[-h,h],$ whereas their sum is integrable. That is why we
combine these two terms in the fourth summations of (73). The same reasoning
applies to the fifth summation of (73).
\end{conclusion}


\begin{thebibliography}{99}                                                                                               %


\bibitem {}P. Djakov and B. Mityagin, Bari-Markus property for Riesz
projections of 1D periodic Dirac operators,\ Math. Nachr., 283, No. 3,
443--462 (2010).

\bibitem {}P. Djakov and B. Mityagin, Criteria for existence of Riesz bases
consisting of root functions of Hill and 1D Dirac operators, J. Funct. Anal.,
263, No. 8, 2300--2332 (2012).

\bibitem {}P. Djakov and B. Mityagin, Unconditional convergence of spectral
decompositions of 1DDirac operators with regular boundary conditions,\ Indiana
Univ. Math. J., 61, No. 1, 359--398 (2012).

\bibitem {}I. M. Gelfand, Expansion in series of eigenfunctions of an equation
with periodic coefficients, Sov. Math. Dokl. {73}, 1117-1120 (1950).

\bibitem {}B. M. Levitan, I. S. Sargsyan, Spectrum and Trace of Ordinary
Differential Operators, MSU press, Moscow 2003.

\bibitem {}A. A. Lunyov and M. M. Malamud, On the Riesz basis property of the
root vector systemfor Dirac-type $2\times\ 2$ systems,\ Dokl. Math., 90, No.
2, 556--561 (2014).

\bibitem {}A. A. Lunyov and M. M. Malamud, On the completeness and Riesz basis
property ofroot subspaces of boundary value problems for first order systems
and applications,\ J. Spectral Theory, 5, No. 1, 17--70 (2015).

\bibitem {}A. A. Lunyov and M. M. Malamud, On the Riesz basis property of root
vectors systemfor $2\times\ 2$ Dirac type operators,\ J. Math. Anal. Appl.,
441, 57--103 (2016).

\bibitem {}A. A. Lunyov, M. M. Malamud, On the formula for characteristic
determinants of boundary value problems for $n\times\ n$ Dirac type systems
and its applications, Advances in Mathematics 478, 110389 (2025).

\bibitem {}D. C. McGarvey, Differential operators with periodic coefficients
in $L_{p}(-\infty,\infty)$, Journal of Mathematical Analysis and Applications,
{11}, 564-596 (1965).

\bibitem {}D. McGarvey, Linear differential systems with periodic coefficients
involving a large parameter, \textit{J. }Diff. Eq.\textit{,} \textbf{2},
115--142 (1966).

\bibitem {}A. M. Savchuk and I. V. Sadovnichaya, Asymptotic Formulas for
Fundamental Solutions of the Dirac Sistem with Complex-Valued Integrable
Potential, Differential Equations, 40 No. 5, 545-556 (2013).

\bibitem {}A. M. Savchuk and A. A. Shkalikov, The Dirac operator with
complex-valued summablepotential,\ Math. Notes, 96, No. 5--6, 777--810 (2014).

\bibitem {}A. A. Shkalikov, Regular spectral problems for systems of ordinary
differential equations of the first order, Russian Math. Surveys, 76:5,
939--941 (2021).

\bibitem {}V. A.\ Tkachenko, Non-self-adjoint Periodic Dirac Operator,
Operator Theory: Advances and Application Vol. 121, 485-512 (2001).

\bibitem {}O. A. Veliev, Essential spectral singularities and the spectral
expantion for the Hill operator, Communucation on Pure and Applied analysis,
16, No. 6, 2227-2251 (2017).

\bibitem {}O. A. Veliev, Asymptotically spectral periodic differential
operators, Mathematical Notes, Vol. 104, No. 3, 364--376 (2018).

\bibitem {}O. A. Veliev, Spectral expansion series with parenthesis for the
nonself-adjoint periodic differential operators, Communucation on Pure and
Applied analysis, 18, No. 1, 397-424 (2019).

\bibitem {}O. A. Veliev, Spectral Expansion for the Non-self-adjoint
Differential Operators with the Periodic Matrix Coefficients, Mathematical
Notes, Vol.112, No. 6, 1025-1043, (2022).

\bibitem {}O. A. Veliev, Non-Self-Adjoint Schr\"{o}dinger Operator with a
Periodic Potential: Spectral Theories for Scalar and Vectorial Cases and Their
Generalizations, Springer, Switzerland, 487p., 2025.

\bibitem {}Zygmund, A., Trigonometric Series, 2 volumes (Cambridge University Press)
\end{thebibliography}
\end{document}